\definecolor{uuuuuu}{rgb}{0.27,0.27,0.27}
\definecolor{sqsqsq}{rgb}{0.1255,0.1255,0.1255}
\newtheorem{definition}{Definition} [section]
\newtheorem{theorem}[definition]{Theorem}
\newtheorem{lemma}[definition]{Lemma}
\newtheorem{corollary}[definition]{Corollary}
\newtheorem{conjecture}[definition]{Conjecture}
\newtheorem{fact}[definition]{Fact}
\newtheorem{question}[definition]{Question}
\newcommand{\blow}[2]{#1(\!(#2)\!)}
\newcommand{\multiset}[1]{\{\hspace{-0.25em}\{\hspace{0.1em}#1\hspace{0.1em}\}\hspace{-0.25em}\}}
\newcommand{\hide}[1]{}
\def\multisets#1#2{\ensuremath{\left(\kern-.3em\left(\genfrac{}{}{0pt}{}{#1}{#2}\right)\kern-.3em\right)}}
\begin{document}
\title{\bf\Large Generating non-jumps from a known one}

\date{\today}
\author{Jianfeng Hou\thanks{Research was supported by National Natural Science Foundation of China (Grant No. 12071077). Email: jfhou@fzu.edu.cn}}
\author{Heng Li\thanks{Email: hengli.fzu@gmail.com}}
\author{Caihong Yang\thanks{Email: ych325123@outlook.com}}
\author{Yixiao Zhang\thanks{Email: fzuzyx@gmail.com}}
\affil{Center for Discrete Mathematics,
            Fuzhou University, Fujian, 350003, China}

\maketitle
\begin{abstract}
Let $r\ge 2$ be an integer. The real number $\alpha\in [0,1]$ is a jump for $r$ if  there exists a constant $c > 0$ such that for any $\epsilon >0$ and any integer $m \geq r$, there exists an integer $n_0(\epsilon, m)$ satisfying any $r$-uniform graph with $n\ge n_0(\epsilon, m)$ vertices and density at least $\alpha+\epsilon$ contains a subgraph with $m$ vertices and density at least $\alpha+c$. A result of Erd\H{o}s, Stone and Simonovits implies that every $\alpha\in [0,1)$ is a jump for $r=2$. Erd\H{o}s asked whether the same is true for $r\ge 3$. Frankl and R\"{o}dl gave a negative answer by showing that $1-\frac{1}{l^{r-1}}$ is not a jump for $r$ if $r\ge 3$ and $l>2r$. After that, more non-jumps are found using a method of Frankl and R\"{o}dl. In this note, we show a method to construct maps $f \colon [0,1] \to [0,1]$ that preserve non-jumps, if $\alpha$ is a non-jump for $r$ given by the method of Frankl and R\"{o}dl, then $f(\alpha)$ is also a non-jump for $r$. We use these maps to study  hypergraph Tur\'{a}n densities and  answer  a question posed by  Grosu.
\end{abstract}

\section{Introduction}\label{SEC:Introduction}
For an integer $r\ge 2$ an $r$-uniform hypergraph (henceforth $r$-graph) $\mathcal{H}$ is a collection of $r$-subsets of some finite set $V$. Let $d(\mathcal{H})=|\mathcal{H}|/\binom{|V(\mathcal{H})|}{r}$ denote the \emph{density} of $\mathcal{H}$. Given a family $\mathcal{F}$ of $r$-graphs we say $\mathcal{H}$ is $\mathcal{F}$-\emph{free}
if it does not contain any member of $\mathcal{F}$ as a subgraph.
The {\em Tur\'{a}n number} $\mathrm{ex}(n,\mathcal{F})$ of $\mathcal{F}$ is the maximum
number of edges in an $\mathcal{F}$-free $r$-graph on $n$ vertices.
The {\em Tur\'{a}n density} $\pi(\mathcal{F} )$ of $\mathcal{F}$ is defined as
$\pi(\mathcal{F}):=\lim_{n\to \infty}\mathrm{ex}(n,\mathcal{F})/{n\choose r}$; the
existence of the limit was established in~\cite{KatonaNemetzSimonovits64}.

Determining $\mathrm{ex}(n,\mathcal{F})$, which is perhaps the central topic in extremal combinatorics, is closely related to jumps. Let $r\ge 2$ be an integer. The real number $\alpha\in [0,1]$ is a $jump$ for $r$ if  there exists a constant $c > 0$ such that for any $\epsilon >0$ and any integer $m \geq r$, there exists an integer $n_0(\epsilon, m)$ satisfying any $r$-graph with $n\ge n_0(\epsilon, m)$ vertices and density at least $\alpha+\epsilon$ contains a subgraph with $m$ vertices and density at least $\alpha+c$. For graphs, the classical Erd\H{o}s-Stone-Simonovits Theorem \cite{ES1965limit, EStone1946structure} determined the Tur\'{a}n numbers of all non-bipartite graphs asymptotically, which implies that every $\alpha \in [0, 1)$ is a jump for $r = 2$. For $r\geq 3$, Erd\H{o}s \cite{erdos1964extremal} proved that every $\alpha \in [0,r!/r^r)$ is a jump for $r$ and conjectured that

\begin{conjecture}\label{erdos-conjecture}
Every $\alpha \in [0,1)$ is a jump for every $r\ge 2$.
\end{conjecture}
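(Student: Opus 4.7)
The plan is to try to extend Erd\H{o}s's proof that every $\alpha\in[0,r!/r^r)$ is a jump to cover the full interval $[0,1)$. Erd\H{o}s's original argument proceeds by showing that any $r$-graph whose density is bounded away from zero must contain a complete balanced $r$-partite subgraph $K^{(r)}_r(t,\ldots,t)$ with $t$ arbitrarily large; such a graph has density $r!/r^r$, and a short averaging argument extracts a subgraph on $m$ vertices of density at least $r!/r^r - o(1)$. My first step would be to seek an analogous ``model structure'' for arbitrary $\alpha\in[r!/r^r,1)$: a natural candidate is an iterated balanced blow-up of a small $r$-graph whose own density strictly exceeds $\alpha$.

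The key steps of the proposed proof would then be: (i) given $\mathcal{H}$ on $n$ vertices with $d(\mathcal{H})\ge \alpha+\epsilon$, apply a supersaturation-style argument combined with random sampling of vertex subsets to locate an approximate copy of the model structure; (ii) invoke a hypergraph regularity / cleaning step to upgrade ``approximate'' to ``exact''; (iii) average over $m$-vertex induced subgraphs of the located structure to obtain one of density at least $\alpha+c$ for some absolute constant $c>0$ depending only on $r$ and the gap between $\alpha$ and the density of the blow-up template. For the case $r=2$ this entire strategy collapses to Erd\H{o}s--Stone--Simonovits, since the Motzkin--Straus identity guarantees that the Lagrangian of any graph is controlled exactly by its clique number, so blow-ups always concentrate mass on a clique and densities can be pushed up freely.

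The hard part will be precisely that for $r\ge 3$ the Motzkin--Straus identity completely fails: one cannot freely redistribute weight within a hypergraph to concentrate density on a smaller, denser substructure. Any iterated blow-up template I choose will have a ``terminal'' density which is rigid under further blow-ups, and there is no a priori reason that a host graph of density just above $\alpha$ should be forced to contain an approximate copy of my chosen template rather than of some competing structure with density stuck exactly at $\alpha$. Indeed, the Frankl--R\"odl construction of $1-\frac{1}{l^{r-1}}$ as a non-jump exploits this rigidity directly, by producing explicit $r$-graphs that cannot be ``compressed'' to anything denser via the blow-up/sampling route. Since the goal of the present paper is to manufacture \emph{more} non-jumps from a known one via a density-preserving map $f\colon[0,1]\to[0,1]$, I do not actually expect the above plan to succeed as stated; a genuine proof of the conjecture would have to bypass the blow-up obstruction entirely, perhaps by controlling density increments through a global invariant of $\mathcal{H}$ rather than by chasing a specific forbidden subconfiguration.
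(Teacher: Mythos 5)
You were asked to ``prove'' something that is not a theorem of the paper at all: Conjecture~\ref{erdos-conjecture} is stated by the authors only as a historical item, and the very next paragraph records that Frankl and R\"odl disproved it for every $r\ge 3$ by exhibiting $1-\frac{1}{l^{r-1}}$ as a non-jump when $l>2r$. It is true only in the case $r=2$ (Erd\H{o}s--Stone--Simonovits) and for the initial interval $[0,r!/r^r)$ when $r\ge 3$ (Erd\H{o}s). So there is no proof in the paper to compare against, and any complete ``proof'' you produced would necessarily have to contain an error.

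To your credit, you recognized exactly this: you correctly identified that the Motzkin--Straus identity is what makes the $r=2$ case collapse to Erd\H{o}s--Stone--Simonovits, that this identity fails for $r\ge 3$ so one cannot concentrate weight on a small dense substructure, and that the Frankl--R\"odl construction exploits precisely this rigidity. Your closing sentence explicitly states that you do not expect the plan to succeed and that the paper's goal is to manufacture further non-jumps, which is the correct reading of the situation. The only substantive shortcoming is a matter of framing: steps (i)--(iii) of your ``proof sketch'' are presented as if they might go through, when in fact step (i) is unsalvageable --- for $\alpha$ of the form $1-\frac{1}{l^{r-1}}$ the Frankl--R\"odl hypergraphs have density slightly above $\alpha$ yet every bounded-size subgraph has Lagrangian (and hence blow-up density) at most $\alpha$, which is exactly the failure of supersaturation-plus-sampling you would need. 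It would have been cleaner to open by saying the conjecture is false for $r\ge 3$ and then give your (correct) diagnosis of why, rather than first proposing a strategy you already know is doomed.
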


The conjecture was disproved by Frankl and R\"{o}dl \cite{Frankl1984hypergraphs} by showing that $1- 1/(l^r-1)$ is not a jump for $r$ if $r \geq 3$ and $l > 2r$. After that, more non-jumps  were found.  Frankl, Peng, R\"{o}dl and Talbot \cite{Frankl2007note} showed that $\frac{5r!}{2r^r}$ is a non-jump for $r \geq 3$, and for any integer $s\geq 1$ and $l\geq 9s+6$ the number $1-\frac{l}{3}+\frac{3s+2}{l^2}$ is a non-jump for $r=3$. Recently, Yan and Peng \cite{yan2021non} show that $\frac{54r!}{25r^r}$ is a non-jump for $r\geq3$ and this is the smallest known non-jump number. Meanwhile, they  gave infinitely many irrational non-jumps for every $r \geq 3$ in the same paper. For more non-jump results, please refer to \cite{peng2007non,peng2007subgraph,peng2007using,peng2008note,peng2008using,peng2009jumping,peng2008generating}. A well-known open question of Erd\H{o}s \cite{E71} is that whether $r!/r^r$ is a jump for $r\geq 3$ and what is the smallest non-jump number? Baber and Talbot \cite{baber2011hypergraphs} showed that every $\alpha \in [0.2299, 0.2316) \cup [0.2871, 8/27)$ is a jump for $r = 3$. However, both questions still remain open.

The above results about non-jumps depends on a approach given by Frankl and R\"{o}dl \cite{Frankl1984hypergraphs}: First, construct a sequence of $r$-graph $(\mathcal{G}_t)_{t=1}^{\infty}$ (call it non-jump $r$-graph sequence in Section 2) each of whose Lagrangian is greater than $\alpha/r!$, then choose a specific graph $\mathcal{G}_t$ such that the Lagrangian of its induced subhypergraphs with fixed vertices is at most $\alpha/r!$, at last use a lemma given by Frankl and R\"{o}dl \cite{Frankl1984hypergraphs} to get a contradiction, call it Frankl-R\"{o}dl method. In this note, we show a method to construct infinitely many maps $f \colon [0,1] \to [0,1]$ that preserve non-jumps and prove that

\begin{theorem}\label{THEOREM:main-theorem}
Suppose that $r\ge 3$ is an integer and $\alpha$ is a known non-jump for $r$ given by the  Frankl-R\"{o}dl method. Then we can generate infinitely many reals $f(\alpha)$ which are non-jumps for $r$.
\end{theorem}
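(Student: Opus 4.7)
The plan is to transfer the Frankl-R\"{o}dl certificate for $\alpha$ into a certificate for $f(\alpha)$ by constructing, for each member of an infinite parameter family, a new non-jump $r$-graph sequence. Recall that the hypothesis provides a sequence $(\mathcal{G}_t)_{t\ge 1}$ of $r$-graphs with $r!\,\lambda(\mathcal{G}_t)>\alpha$ for all $t$, while for every fixed positive integer $p$ the maximum Lagrangian of an induced subgraph of $\mathcal{G}_t$ obtained by removing $p$ vertices tends to $\alpha/r!$ as $t\to\infty$; combined with the Frankl-R\"{o}dl lemma this is precisely what certifies that $\alpha$ is a non-jump. My goal is to build, from an auxiliary finite $r$-graph $H$ (the parameter), a new sequence $(\mathcal{G}_t^{H})_{t\ge 1}$ certifying that a prescribed real $f_H(\alpha)$ is a non-jump.

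First I would specify the operation. The natural candidate is a vertex substitution: fix a vertex $v$ of $H$, replace $v$ by a disjoint copy of $\mathcal{G}_t$, and attach cross-edges inherited from the link of $v$ in $H$. A standard Lagrange-multiplier computation on the weight simplex shows that $\lambda(\mathcal{G}_t^H)$ is an explicit continuous function $f_H$ of $\lambda(\mathcal{G}_t)$, $\lambda(H\setminus v)$, and the link data; ranging $H$ over finite $r$-graphs (or varying $v$ and the link sizes) produces infinitely many such maps $f_H$, which after rescaling take $\alpha \mapsto f_H(\alpha)\in[0,1]$. By construction $r!\,\lambda(\mathcal{G}_t^H)>f_H(\alpha)$ holds for every $t$, giving the lower-bound half of the Frankl-R\"{o}dl certificate.

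Next I would verify the induced-subgraph condition for the new sequence. Any induced subgraph of $\mathcal{G}_t^H$ obtained by removing a fixed set of $p$ vertices decomposes into a part lying inside the substituted copy of $\mathcal{G}_t$ (whose Lagrangian is controlled by a deleted-vertex Lagrangian of $\mathcal{G}_t$) and a bounded part lying inside $H\setminus v$. Since the deleted-vertex Lagrangians of $\mathcal{G}_t$ converge to $\alpha/r!$ by hypothesis, plugging this into the explicit formula $f_H$ shows that the maximum Lagrangian over $p$-vertex deletions of $\mathcal{G}_t^H$ converges to $f_H(\alpha)/r!$. Applying the Frankl-R\"{o}dl lemma to $(\mathcal{G}_t^H)$ then yields that $f_H(\alpha)$ is a non-jump for $r$, and showing $f_H$ is non-constant near $\alpha$ (an elementary calculus check on the explicit formula) guarantees that distinct choices of $H$ give distinct non-jump values, producing the required infinite family.

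The main obstacle I anticipate is in the induced-subgraph step: deleting a vertex from the $\mathcal{G}_t$-part of $\mathcal{G}_t^H$ shifts the optimal weight distribution across the entire substituted structure, and one must rule out that clever deletion patterns straddling both $H\setminus v$ and the blow-up yield Lagrangian $>f_H(\alpha)/r!+o(1)$. Controlling this requires a careful case analysis of the Lagrangian optimisation, together with a compactness argument ensuring that every near-optimal weighting is close to an extremal one in which all mass outside the $\mathcal{G}_t$-part is bounded by a function of $|V(H)|$; this is precisely where one needs not merely the non-jumpness of $\alpha$, but the full sequence-and-deletion structure supplied by the Frankl-R\"{o}dl method.
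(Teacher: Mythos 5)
Your vertex-substitution construction is exactly the $r$-graph special case of the paper's pattern union $P \oplus_i P_{\mathcal{G}_t}$ (with $P = P_H$ and $i = v$), so the route is the right one. But the proposal leaves the two load-bearing steps unjustified, and the tools you suggest for the hard one are not the right ones.

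First, the lower-bound half. You assert ``By construction $r!\,\lambda(\mathcal{G}_t^H) > f_H(\alpha)$ holds for every $t$,'' but this does not follow from the construction alone. One needs the optimal weight vector for $f_H$ to place strictly positive mass on the substituted index $v$, otherwise the strict gap $\lambda(\mathcal{G}_t) > \alpha$ contributes nothing to the composite Lagrangian. The paper secures this with the hypothesis that $P$ is \emph{minimal}: then any maximizer of $\lambda_{\mathcal{E}}(x) + \lambda_0 x_m^r$ has $x_m > 0$, which gives $\lambda(P \oplus_m P_t) \geq f(\lambda_0) + \epsilon(t) x_m^r$ with a genuinely positive excess. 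Without some covering/minimality hypothesis on $H$ and $v$, your claimed strict inequality can fail.

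Second, the ``main obstacle'' you flag is the crux, and a ``careful case analysis \ldots together with a compactness argument'' is not how it gets resolved. The paper proves a clean additive identity (Lemma~\ref{OBSERVATION:lambda(P+P_t)}): for \emph{any} weight vector $x$ on the indices of $P \oplus_i P_t$,
\[
\lambda_{\widehat{\mathcal{E}}}(x) = \lambda_{\mathcal{E}}(x_1,\dots,x_m) + \lambda_{\mathcal{E}_t}(x_{i_1},\dots,x_{i_{m_t}}), \qquad x_i = \sum_j x_{i_j}.
\]
This holds pointwise, so it applies verbatim to the weight vector of any induced subpattern on a $k$-set $S$. Since $S$ meets the $P_t$-indices in at most $k$ places, the induced-subpattern condition $\lambda(P_t[S']) \leq \lambda_0$ together with homogeneity bounds the second summand by $\lambda_0 x_i^r$, and the whole expression by $f(\lambda_0)$. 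There is no shifting of optimal weight to rule out and no compactness needed; the decomposition makes the bound structural rather than perturbative. This is what actually closes the argument, and it is missing from your outline.

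A smaller point: your description of the Frankl--R\"{o}dl certificate is off. The relevant condition is that for each fixed $k$ and $t$ large, \emph{every} induced subpattern on $k$ indices of $P_t$ has Lagrangian at most $\lambda_0$; it is not a statement about deleting a fixed number $p$ of vertices (the number of vertices $m_t$ grows with $t$, so these are not the same). Finally, the paper allows $P$ to be an arbitrary minimal pattern (multisets permitted), which is a genuine generalization of your auxiliary $r$-graph $H$; for Theorem~\ref{THEOREM:main-theorem} alone the $r$-graph case suffices, but the generality is used later (Theorem~\ref{infinite-lambda}).
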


The method used in the proof of Theorem \ref{THEOREM:main-theorem} is motivated by Liu and Pikhurko \cite{liu2022hypergraph}, who defined an operation of patterns to study hypergraph Tur\'{a}n densities. We will define the union of two patterns and show that the resulting pattern still  preserves some properties of the original one. It also has applications in  hypergraph Tur\'{a}n densities. Now, we  list one. For every integer $r\ge 2$, define
\begin{align*}
\Pi_{\mathrm{fin}}^{(r)} & := \left\{\pi(\mathcal{F}) \colon \text{$\mathcal{F}$ is a finite family of $r$-graphs} \right\}, \quad\text{and} \\
\Pi_{\infty}^{(r)} & := \left\{\pi(\mathcal{F}) \colon \text{$\mathcal{F}$ is a (possibly infinite) family of $r$-graphs} \right\}.
\end{align*}

Grosu \cite{G16} asked the following question.
\begin{question}[see Grosu \cite{G16}]\label{question-Grosu}
For any $r \geq 3$ find a polynomial $f \in \mathbb{Q}[x]$ such that for any Tur\'{a}n density $x$ for $r$-graphs, $f(x)$ is also a Tur\'{a}n density for $r$-graphs.
\end{question}

In fact, the arguments in \cite{G16} with a minor modification imply that $1-\frac{1-a}{2^{r-1}}\in \Pi_{\infty}^{(r)}$ if $a\in \Pi_{\infty}^{(r)}$. We generalize this by showing
\begin{theorem}\label{infinite-lambda}
For an integer $r \geq 2$, if $a \in \Pi_{\infty}^{(r)}$, then $1-\frac{1-a}{m^{r-1}}\in \Pi_{\infty}^{(r)}$ for each integer $m\ge 2$. \end{theorem}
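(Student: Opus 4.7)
The plan is to use the standard $m$-partite blow-up construction that naturally realizes the target density. Fix a family $\mathcal{F}$ of $r$-graphs with $\pi(\mathcal{F})=a$, and define the (possibly infinite) forbidden family
\[
\mathcal{F}^{\ast}:=\bigl\{H \,:\, H \text{ admits no partition } V(H)=V_{1}\sqcup\cdots\sqcup V_{m} \text{ with } H[V_{i}]\text{ } \mathcal{F}\text{-free for every } i\bigr\}.
\]
I will show $\pi(\mathcal{F}^{\ast})=1-\tfrac{1-a}{m^{r-1}}$, which gives the theorem since $\mathcal{F}^{\ast}$ is a family of $r$-graphs and hence its Tur\'an density lies in $\Pi_{\infty}^{(r)}$.

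For the lower bound, take $n$ vertices, partition them into $m$ parts of sizes $\lfloor n/m\rfloor$ or $\lceil n/m\rceil$, place on each $V_i$ an $\mathcal{F}$-free $r$-graph of density $a-o(1)$ (which exists as $n\to\infty$ because $\pi(\mathcal{F})=a$), and add every $r$-edge that meets at least two parts. This construction has density
\[
\tfrac{1}{m^{r-1}}\bigl(a-o(1)\bigr)+\bigl(1-\tfrac{1}{m^{r-1}}\bigr)\;\longrightarrow\; 1-\tfrac{1-a}{m^{r-1}}.
\]
Every subgraph inherits the given partition, with each part $H[V_{i}\cap V(H')]\subseteq H[V_i]$ still $\mathcal{F}$-free, so nothing in $\mathcal{F}^{\ast}$ can appear as a subgraph.

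For the matching upper bound, the key observation is that $H$ is $\mathcal{F}^{\ast}$-free if and only if $V(H)$ has a partition into $m$ parts with each part inducing an $\mathcal{F}$-free subgraph (the "only if" direction follows from taking $H$ itself as a subgraph; the "if" direction propagates to every subgraph by restriction, as above). Given such a partition of $H$ with part sizes $n_{i}$, I will use the elementary bound
\[
|E(H)|\;\leq\;\binom{n}{r}-\sum_{i=1}^{m}\Bigl(\binom{n_{i}}{r}-|E(H[V_{i}])|\Bigr),
\]
apply $|E(H[V_{i}])|\leq (a+\epsilon)\binom{n_{i}}{r}$ whenever $n_{i}$ exceeds the threshold $N(\epsilon)$ coming from $\pi(\mathcal{F})=a$, discard the $O\bigl(m\binom{N}{r}\bigr)=O(1)$ edge contribution of any small parts, and use convexity of $\binom{x}{r}$ on the large parts to obtain $\sum_{\text{large }i}\binom{n_{i}}{r}\geq m\binom{(n-mN)/m}{r}=(1+o(1))\binom{n}{r}/m^{r-1}$. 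Sending first $n\to\infty$ and then $\epsilon\to 0$ yields $\pi(\mathcal{F}^{\ast})\leq 1-\tfrac{1-a}{m^{r-1}}$.

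The chief technical issue is the convexity bookkeeping: the witnessing partition is not guaranteed to be balanced, and some parts may lie below the threshold $N(\epsilon)$. Bounding the number of small parts by the constant $m$, so that their total edge contribution is $O(1)$, together with convexity of $\binom{x}{r}$ on the remaining parts, absorbs this; but one has to track the $o(1)$ terms uniformly in the choice of partition. Beyond this point, the argument uses only the definition of $\pi$ and elementary counting, generalizing Grosu's $m=2$ construction in a transparent way.
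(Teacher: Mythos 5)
Your proof is correct, but it follows a genuinely different route from the paper's. The paper proves the statement as a corollary of the pattern-union machinery it develops: it invokes Grosu's characterization $\overline{\Lambda}^{(r)} = \Pi_\infty^{(r)}$ (Lemma~\ref{Lemma:closed}), takes an $r$-graph $\mathcal{G}$ with $\lambda(\mathcal{G})$ close to $a$, and applies Corollary~\ref{CLAIM:lambda(p_t)=varphi(lambda(p_t))-subset} to the union $P\oplus_{[m]} P_{\mathcal{G}}$ with $P$ the ``complete $m$-partite'' $r$-pattern, computing $\lambda(P\oplus_{[m]} P_{\mathcal{G}}) = 1 - \frac{1-\lambda(\mathcal{G})}{m^{r-1}}$ and passing to the closure. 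You instead avoid Lagrangians entirely: you exhibit the forbidden family $\mathcal{F}^\ast$ explicitly and verify $\pi(\mathcal{F}^\ast) = 1-\frac{1-a}{m^{r-1}}$ by the matching lower-bound blow-up construction and the convexity-based upper bound. This is essentially Grosu's original argument for $m=2$ (which the paper alludes to but does not carry out) extended to general $m$, and it is more elementary and self-contained. The trade-off is that the paper's route illustrates the utility of the pattern-union operation that is the note's main contribution, whereas yours proves Theorem~\ref{infinite-lambda} in isolation. Two small points worth tightening in your write-up: when you bound $\sum_{\text{large } i}\binom{n_i}{r}$ by convexity, you should note that padding with empty parts (or simply observing $k\binom{M/k}{r}$ decreases in $k$) handles the case of fewer than $m$ large parts; and you should observe that for $n$ large there is at least one large part, so the bound is nonvacuous. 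Neither affects the correctness.
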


 We remark that if $\alpha$ is a non-jump for $r$, then for any constant $c>0$ there exists a Tur\'{a}n density $\gamma$ such that $\gamma\in (\alpha, \alpha+c]$. By Theorem \ref{THEOREM:main-theorem} and the fact that  $f(x)=1-\frac{1-x}{m^{r-1}}$ is a linear strictly monotone increasing function on $[0,1]$, we have
\begin{corollary}\label{COROLLARY:Keep-nonjump}
If $\alpha$ is a non-jump for $r$, then for  each integer $m\geq 2$, $1-\frac{1-\alpha}{m^{r-1}}$ is also a non-jump for $r$.
\end{corollary}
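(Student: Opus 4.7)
The plan is to push the ``accumulation from above'' characterization of a non-jump through the linear map $f(x) := 1 - \frac{1-x}{m^{r-1}}$, using Theorem \ref{infinite-lambda} to send each approximating Tur\'{a}n density to a new Tur\'{a}n density.

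The first step is to unpack the hypothesis using the remark immediately preceding the statement: because $\alpha$ is a non-jump for $r$, for every $c > 0$ there exists a Tur\'{a}n density $\gamma \in \Pi_{\infty}^{(r)}$ with $\gamma \in (\alpha, \alpha + c]$. Morally this is just the contrapositive of the definition of a jump: if some $c > 0$ admitted an interval $(\alpha, \alpha+c)$ entirely free of Tur\'{a}n densities, then one could extract from $\alpha$ a jump of size $c/2$.

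The second step transports these approximating densities through $f$. Fix an arbitrary $c' > 0$ and set $c := m^{r-1} c'$. Choose $\gamma \in (\alpha, \alpha + c]$ with $\gamma \in \Pi_{\infty}^{(r)}$, and apply Theorem \ref{infinite-lambda} with $a = \gamma$ to obtain
\[
f(\gamma) \;=\; 1 - \frac{1-\gamma}{m^{r-1}} \;\in\; \Pi_{\infty}^{(r)}.
\]
Since $f$ is linear with positive slope $1/m^{r-1}$, a direct computation gives $f(\gamma) - f(\alpha) = (\gamma - \alpha)/m^{r-1} \in (0, c']$, so $f(\gamma)$ is a Tur\'{a}n density lying in $\bigl(f(\alpha),\, f(\alpha) + c'\bigr]$.

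Because $c' > 0$ was arbitrary, Tur\'{a}n densities accumulate on $f(\alpha)$ from above, and the easy reverse direction of the equivalence used in the first step then forces $f(\alpha) = 1 - \frac{1 - \alpha}{m^{r-1}}$ to be a non-jump for $r$. The proof is essentially just chasing a linear bijection through the non-jump characterization, so no serious obstacle is expected; the one point that deserves care is the standard equivalence between ``non-jump for $r$'' and ``right-accumulation point of $\Pi_{\infty}^{(r)}$,'' which is invoked in both directions and which the remark preceding the corollary records without proof.
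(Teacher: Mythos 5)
Your proof is correct and is essentially the argument the paper intends: transport the ``non-jump $\iff$ right-accumulation point of $\Pi_\infty^{(r)}$'' characterization through the increasing linear map $f(x)=1-\frac{1-x}{m^{r-1}}$, using Theorem~\ref{infinite-lambda} to send each approximating Tur\'an density $\gamma\in(\alpha,\alpha+c]$ to a Tur\'an density $f(\gamma)\in(f(\alpha),f(\alpha)+c/m^{r-1}]$. One incidental point worth flagging: the remark preceding the corollary cites Theorem~\ref{THEOREM:main-theorem}, but that is evidently a typo for Theorem~\ref{infinite-lambda} (Theorem~\ref{THEOREM:main-theorem} only concerns non-jumps produced by the Frankl--R\"odl method, which is not the hypothesis of the corollary), so you invoked the right result; the only step left implicit in both your proof and the paper is the two-sided equivalence between being a non-jump and being a right-accumulation point of $\Pi_\infty^{(r)}$, which you correctly identify and which is standard.
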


The rest of the paper is organized as follow. In Section 2 we introduce some preliminary definitions and results about patterns, and define the union of patterns. In Section 3, we give a property of the union of two patterns and prove Theorem \ref{THEOREM:main-theorem}. In Section 4, we prove  Theorem \ref{infinite-lambda}.

\section{Pattern and the union}
\subsection{Pattern}
In this subsection, we introduce some preliminary definitions and results about patterns. Pikhurko \cite{PI14} defined patterns to study the possible hypergraph Tur\'{a}n densities. In this note, we give the special case of patterns. The general case can be found in \cite{PI14}.
Let  $r\ge 2$ be an integer and $S$ be a finite set. A $r$-multiset $E$ on $S$ is an unordered collection of $r$ elements from $S$ with repetitions allowed. For $i\in S$, we use $E(i)$ denote the multiplicity of $i$ in $E$.  An $r$-uniform \emph{pattern} (henceforth $r$-pattern) is a pair $P=(m,\mathcal{E})$ where $m$ is a positive integer,
$\mathcal{E}$ is a collection of $r$-multisets on $[m]$. For $i\in [m]$ and $s=0,1,\cdots,r$, let $\mathcal{E}^{i,s}=\{E\in \mathcal{E} : E(i)=s\}.$
Let $V_1,\dots,V_m$ be disjoint sets and let $V=V_1\cup\dots\cup V_m$.
The \emph{profile} of an $r$-set $S\subseteq V$ (with respect to $V_1,\dots,V_m$) is
the $r$-multiset on $[m]$ that contains element $i$ with multiplicity $|S \cap V_i|$ for every $i\in [m]$.
For an $r$-multiset $E \subseteq [m]$ let
$\blow{E}{V_1,\dots,V_m}$ consist of all $r$-subsets of $V$ whose profile is $E$.
We call this $r$-graph the \emph{blowup} of $E$ and
the $r$-graph
$$
\blow{\mathcal{E}}{V_1,\dots,V_m} := \bigcup_{E\in \mathcal{E}} \blow{E}{V_1,\dots,V_m}
$$
is called the \emph{blowup} of $\mathcal{E}$ (with respect to $V_1,\dots,V_m$).
We say an $r$-graph $\mathcal{H}$ is a \emph{$P$-construction} it is a blowup of $\mathcal{E}$.

Let $S$ be a subset of $[m]$, we use $P[S]=(|S|, \mathcal{E}[S])$ to denote the induced subpattern with respect to $S$, where $\mathcal{E}[S]=\{E\in \mathcal{E} : E\,\, {\rm is}\,\, {\rm a}\,\, {\rm multiset}\,\, {\rm on }\,\, S \}$. For $i\in [m]$ let $P-i$ be the pattern obtained
from $P$ by \emph{removing index $i$}, that is, we remove $i$ from $[m]$ and delete
all multisets containing $i$ from $\mathcal{E}$ (and relabel the remaining indices to form the set $[m-1]$).  Note that these are special cases of the more general
definitions from \cite{PI14}.

It is easy to see that the notion of a pattern is a generalization of hypergraphs,
since every $r$-graph is a pattern in which $\mathcal{E}$ is a collection of (ordinary) $r$-sets. Thus, we use $P_{\mathcal{G}}$ to denote the pattern with respect to $r$-graph $\mathcal{G}$.
For many families $\mathcal{F}$ the extremal $\mathcal{F}$-free constructions are usually a blowup of some simple patterns.
For example, let $P_{B} = (2, \left\{ \multiset{1,2,2}, \multiset{1,1,2} \right\})$ (here we use $\multiset{\empty}$ to distinguish it from ordinary sets).
Then a $P_{B}$-construction is a $3$-graph $\mathcal{H}$ whose vertex set can be partitioned into two parts $V_1$ and $V_2$
such that $\mathcal{H}$ consists of all triples that have nonempty intersections with both $V_1$ and $V_2$.
A famous result in Hypergraph Tur\'{a}n problem shows that the pattern $P_{B}$ characterizes the structure of (large) extremal constructions that do not contain a Fano plane (see \cite{DF00,FS05,KS05b}).

Denote by $\Delta_{m-1}$ the standard $(m-1)$-dimensional simplex,
i.e.,
\begin{align}
\Delta_{m-1} = \left\{(x_1,\ldots,x_m)\in [0,1]^m\colon x_1+\cdots+x_m= 1\right\}. \notag
\end{align}
For a pattern $P = (m, \mathcal{E})$ let the \emph{Lagrange polynomial} of $\mathcal{E}$ be
\begin{align*}
\lambda_{\mathcal{E}}(x_1,\dots,x_m)
:= r!\,\sum_{E\in \mathcal{E}}\; \prod_{i=1}^m\; \frac{x_i^{E(i)}}{E(i)!}.
\end{align*}
 In other words, $\lambda_{\mathcal{E}}$ gives the
asymptotic edge density of a large blowup of $\mathcal{E}$, given its relative part sizes $x_i$.
The \emph{Lagrangian} of $P$ is defined as follows:
\begin{align*}
\lambda(P) := \max\left\{\lambda_{\mathcal{E}}(x_1,\dots,x_m) \colon (x_1,\dots,x_m)\in \Delta_{m-1} \right\}.
\end{align*}
Since $\Delta_{m-1}$ is compact,
a well known theorem of Weierstra\ss\ implies that the restriction of $x$ to $\Delta_{m-1}$
attains a maximum value. Thus $\lambda(P)$ is well-defined. We call $P$ \emph{minimal} if $\lambda(P-i)$ is strictly smaller than $\lambda(P)$ for every $i\in [m]$. Note that if $P$ is minimal and $x=(x_1,\ldots,x_m)\in \Delta_{m-1}$ is a vector that maximizes $\lambda_{\mathcal{E}}$, then $x_i>0$ for $i\in [m]$. In particular, for an $r$-graph $G$, the \emph{Lagrangian} of $\mathcal{G}$ is  defined as  $\lambda(\mathcal{G})=\lambda(P_{\mathcal{G}})$.

\begin{fact}
Let $P=(m,\mathcal{E})$ be a $r$-pattern. For $S\subseteq [m]$, $\lambda(P)\ge \lambda(P[S])$.
\end{fact}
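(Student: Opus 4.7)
The plan is to show this by extending an optimal point for $P[S]$ to a point in $\Delta_{m-1}$ by padding with zeros, and observing that the Lagrange polynomial of $\mathcal{E}$ evaluated at this extended point equals the Lagrange polynomial of $\mathcal{E}[S]$ at the original point.

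More precisely, I would first fix a maximizer $y = (y_i)_{i\in S} \in \Delta_{|S|-1}$ of $\lambda_{\mathcal{E}[S]}$, whose existence is guaranteed by the Weierstrass argument already invoked in the excerpt. Define $x = (x_1,\dots,x_m) \in \Delta_{m-1}$ by setting $x_i = y_i$ for $i \in S$ and $x_i = 0$ for $i \notin S$; clearly the coordinates still sum to $1$ and lie in $[0,1]$, so $x \in \Delta_{m-1}$.

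Next I would evaluate
\begin{align*}
\lambda_{\mathcal{E}}(x_1,\dots,x_m) = r!\sum_{E\in \mathcal{E}} \prod_{i=1}^{m} \frac{x_i^{E(i)}}{E(i)!}.
\end{align*}
For any $E\in \mathcal{E}$ that has $E(i) \geq 1$ for some $i\notin S$, the corresponding summand vanishes because $x_i = 0$. The surviving summands are exactly those indexed by $E$ with $E(i)=0$ for all $i\notin S$, i.e.\ multisets on $S$, which by definition is $\mathcal{E}[S]$. Hence $\lambda_{\mathcal{E}}(x) = \lambda_{\mathcal{E}[S]}(y) = \lambda(P[S])$. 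Since $\lambda(P)$ is the maximum of $\lambda_{\mathcal{E}}$ over $\Delta_{m-1}$, we get $\lambda(P) \geq \lambda_{\mathcal{E}}(x) = \lambda(P[S])$, as desired.

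There is no real obstacle here; the only thing to be careful about is matching the indexing of the variables in $\lambda_{\mathcal{E}[S]}$ (which uses variables indexed by $S$ or relabelled to $[|S|]$) with the corresponding coordinates of $x$ in $\lambda_{\mathcal{E}}$, but this is purely bookkeeping since the monomial $\prod_{i\in S} y_i^{E(i)}/E(i)!$ depends only on the values $y_i$ and not on how the indices are labelled.
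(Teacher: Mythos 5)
Your proof is correct and is exactly the natural argument: pad a maximizer of $\lambda_{\mathcal{E}[S]}$ with zeros on the coordinates outside $S$, note that every multiset in $\mathcal{E}\setminus\mathcal{E}[S]$ contributes a vanishing monomial, and conclude $\lambda(P)\ge\lambda_{\mathcal{E}}(x)=\lambda(P[S])$. The paper states this as a Fact without giving a proof, evidently regarding it as routine, and your write-up supplies precisely the argument the authors have in mind.
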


The following  lemma  is simple, whose short proof is included here for the sake of completeness.
\begin{lemma}\label{LEMMA:lambda(P)>lambda(G)}
Let $P$ be a $r$-pattern. If $\mathcal{G}$  is a $P$-construction, then $\lambda(P)\geq \lambda(\mathcal{G})$.
\end{lemma}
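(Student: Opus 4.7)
My plan is to unpack the definition of the Lagrangian on both sides and then bound the vertex-weighted polynomial for $\mathcal{G}$ by aggregating weights within each blowup part. Fix disjoint vertex classes $V_1,\dots,V_m$ so that $\mathcal{G}=\blow{\mathcal{E}}{V_1,\dots,V_m}$. Take any weight vector $y=(y_v)_{v\in V(\mathcal{G})}\in\Delta_{|V(\mathcal{G})|-1}$ that attains $\lambda(\mathcal{G})$, and for each $i\in[m]$ set $x_i:=\sum_{v\in V_i}y_v$. Then $x_i\ge 0$ and $\sum_{i=1}^m x_i=1$, so $(x_1,\dots,x_m)\in\Delta_{m-1}$, which will be the candidate point used to bound $\lambda(P)$ from below by the value I obtain.

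Next I would reorganize the sum defining $\lambda_{\mathcal{G}}(y)$ according to the profile of each edge. Since $\mathcal{G}$ is a blowup, an $r$-subset $e\subseteq V$ is an edge of $\mathcal{G}$ precisely when its profile lies in $\mathcal{E}$, so
\begin{align*}
\lambda_{\mathcal{G}}(y)=r!\sum_{E\in\mathcal{E}}\;\sum_{\substack{e\subseteq V\\ \mathrm{profile}(e)=E}}\prod_{v\in e}y_v
= r!\sum_{E\in\mathcal{E}}\prod_{i=1}^{m}e_{E(i)}\bigl(y_v:v\in V_i\bigr),
\end{align*}
where $e_k$ denotes the elementary symmetric polynomial of degree $k$. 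The key inequality is then the standard bound $e_k(z_1,\dots,z_n)\le\tfrac{1}{k!}(z_1+\dots+z_n)^k$, valid whenever the $z_j$ are nonnegative, which follows by expanding $(z_1+\dots+z_n)^k$ as a sum over sequences and noting that the injective sequences already contribute $k!\,e_k(z)$ while the remaining terms are nonnegative. Applied inside each $V_i$ this yields $e_{E(i)}(y_v:v\in V_i)\le x_i^{E(i)}/E(i)!$.

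Plugging this into the reorganized sum gives
\begin{align*}
\lambda_{\mathcal{G}}(y)\;\le\; r!\sum_{E\in\mathcal{E}}\prod_{i=1}^{m}\frac{x_i^{E(i)}}{E(i)!}\;=\;\lambda_{\mathcal{E}}(x_1,\dots,x_m)\;\le\;\lambda(P),
\end{align*}
where the last inequality is the definition of $\lambda(P)$ as a maximum over $\Delta_{m-1}$. Since $y$ was chosen to attain $\lambda(\mathcal{G})$, this proves $\lambda(\mathcal{G})\le\lambda(P)$.

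I do not foresee a serious obstacle here: the only nontrivial ingredient is the elementary symmetric polynomial bound, which is immediate from the multinomial expansion once one observes that all omitted terms involve repeated indices and are therefore nonnegative. The remaining work is bookkeeping to make sure the edge sum of $\mathcal{G}$ is indexed exactly by profiles in $\mathcal{E}$, which is the content of the blowup definition.
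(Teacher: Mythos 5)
Your proof is correct, but it takes a genuinely different route from the paper's. The paper argues structurally: it invokes the fact that the Lagrangian of a graph (or pattern) equals the maximum edge density over all its blowups, and then shows that any blowup of $\mathcal{G}$ is a subgraph of a corresponding blowup of $\mathcal{E}$ (by pushing the part structure $V_1,\dots,V_m$ forward through the blowup), so the density comparison follows by monotonicity. You instead work directly with the Lagrange polynomial: you take an optimal weight vector $y$ on $V(\mathcal{G})$, aggregate it to $x_i=\sum_{v\in V_i}y_v$, factor the edge sum by profiles into a product of elementary symmetric polynomials $e_{E(i)}(y_v:v\in V_i)$, and then apply the bound $e_k(z)\le\frac{1}{k!}(\sum_j z_j)^k$ termwise. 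Your version is more self-contained, since it does not rely on the auxiliary characterization of the Lagrangian as an asymptotic blowup density, and it isolates a clean polynomial inequality as the crux; the paper's version is shorter on the page because it outsources the analytic content to that cited characterization and reduces everything to a subgraph containment. Both are valid, and the aggregated vector $x$ you exhibit is precisely the weighting that makes the bound tight in the limit the paper is implicitly appealing to.
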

\begin{proof}
Let $P=(m, \mathcal{E})$ be a $r$-pattern and $\mathcal{G}$ be a $P$-construction. Notice that for a $r$-graph $\mathcal{G}$ (or $r$-pattern $P$) $\lambda(\mathcal{G})$ (or $\lambda(P)$) equals the maximum edge density of its blowup. So, it suffices to show any blowup of $\mathcal{G}$ is a subgraph of some $P$-construction on $|V(\mathcal{G})|$ vertices. Since $\mathcal{G}$ is a $P$-construction, $\mathcal{G}$ has a partition $V(\mathcal{G})=V_1\cup\dots\cup V_m$ such that for each $e\in \mathcal{G}$, the profile of $e$ with respect to $V_1, \dots, V_m$ is contained in $\mathcal{E}$. Suppose that $\mathcal{G}'$ is a blowup of $\mathcal{G}$ where each vertices $v$ in $\mathcal{G}$ is replaced by a set $W_v$ in $\mathcal{G}'$. Let $V_i'=\cup_{v\in V_i}W_v$. Then $V_1',\dots, V_m'$ is a partition of $V(\mathcal{G}')$ and the  profile of each $e\in \mathcal{G}'$ with respect to $V_1', \dots, V_m' $ is also contained in $\mathcal{E}$. This means that $\mathcal{G}'$ is a subgraph of the $P$-construction $\mathcal{E}(V'_1, \dots, V_m')$.
\end{proof}

We remark that the inequality in Lemma \ref{LEMMA:lambda(P)>lambda(G)} can be strictly true. For example, consider the 3-pattern $P=(2, \{ \multiset{1,1,2} \})$ and a $P$-construction $\mathcal{G}$ with exactly one edge $\{v_1, v_2, v_3\}$. Clearly,
\[
\lambda(P)=\max \{3x_1^2x_2 : (x_1, x_2)\in \Delta_1\}=4/9>2/9=\lambda(\mathcal{G}).
\]

Now we give some results about non-jump, Frankl and R\"{o}dl  \cite{Frankl1984hypergraphs}  gives a necessary and sufficient condition for a number $\lambda_0$ to be a jump.
\begin{lemma}\label{LEMMA:jump_condition}
The following two properties are equivalent.
\begin{itemize}
  \item [(a)] $\lambda_0$ is a jump for $r$.
 \item [(b)] There exists some finite family $\mathcal{F}$ of $r$-uniform graphs satisfying $\pi(\mathcal{F})\leq \lambda_0$ and $\lambda(F)>\lambda_0$ for all $F\in \mathcal{F}$.
\end{itemize}
\end{lemma}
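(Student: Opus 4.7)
I would prove the two implications separately; $(b)\Rightarrow(a)$ is the more substantial direction. For $(a)\Rightarrow(b)$, let $c>0$ be the jump constant. Pick an integer $m\geq r$ large enough that $(\lambda_0+c)\prod_{i=1}^{r-1}(1-i/m)>\lambda_0$, and define $\mathcal{F}$ to be the finite family of all $r$-graphs on exactly $m$ vertices with density $\geq \lambda_0+c$. For every $F\in\mathcal{F}$, evaluating $\lambda_{\mathcal{E}_F}$ at the uniform weight vector $(1/m,\dots,1/m)$ gives
\[
 \lambda(F)\;\geq\; d(F)\,\prod_{i=1}^{r-1}\!\left(1-\tfrac{i}{m}\right)\;>\;\lambda_0.
\]
To bound $\pi(\mathcal{F})\leq\lambda_0$, fix any $\epsilon'>0$ and apply the jump hypothesis with parameters $\epsilon'$ and $m$: any $\mathcal{F}$-free $\mathcal{H}$ on $n\geq n_0(\epsilon',m)$ vertices with $d(\mathcal{H})\geq\lambda_0+\epsilon'$ would contain an $m$-vertex subgraph of density $\geq\lambda_0+c$, and this subgraph already lies in $\mathcal{F}$ --- a contradiction. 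Hence $\pi(\mathcal{F})\leq\lambda_0+\epsilon'$ for every $\epsilon'>0$.

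For $(b)\Rightarrow(a)$, set $c:=\min_{F\in\mathcal{F}}\lambda(F)-\lambda_0>0$. Given $\epsilon>0$ and $m\geq r$, the target is an $m$-vertex subgraph of density at least $\lambda_0+c/2$ inside any sufficiently large $\mathcal{H}$ with $d(\mathcal{H})\geq\lambda_0+\epsilon$. Since $\lambda_0+\epsilon>\pi(\mathcal{F})$, a standard hypergraph supersaturation argument yields $\Omega(n^{v(F)})$ copies of some fixed $F\in\mathcal{F}$ in $\mathcal{H}$. From these copies I would extract a blowup $F(t_1,\dots,t_k)\subseteq\mathcal{H}$ whose part sizes are approximately proportional to a vector $(x_1^*,\dots,x_k^*)$ maximizing $\lambda_{\mathcal{E}_F}$, with $T=\sum t_i\to\infty$ as $n\to\infty$. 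The density of this blowup tends to $\lambda(F)\geq\lambda_0+c$, so for $n$ large it has at least $m$ vertices and by averaging contains an $m$-vertex subgraph of density $\geq\lambda_0+c/2$.

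The main obstacle is the blowup-extraction step in the second direction: one needs a blowup whose part sizes realize the Lagrangian-optimizing ratios, not merely a balanced blowup. A balanced blowup only has density $r!\,|E(F)|/v(F)^r$, which can be strictly smaller than $\lambda(F)$, as witnessed by the example following Lemma~\ref{LEMMA:lambda(P)>lambda(G)}. Producing a blowup with the correct ratios uses iterated supersaturation together with a pigeonhole-type refinement inside an initial balanced blowup, and this is the technical core of the Frankl--R\"odl framework.
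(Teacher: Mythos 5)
The paper states this lemma without proof, citing Frankl and R\"odl \cite{Frankl1984hypergraphs}, so there is no in-paper argument to compare against; your proposal reconstructs the standard Frankl--R\"odl proof, and it is correct in outline.

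The $(a)\Rightarrow(b)$ direction is essentially complete. The inequality
\[
\lambda(F)\ \geq\ d(F)\prod_{i=1}^{r-1}\left(1-\frac{i}{m}\right)
\]
follows from $r!\binom{m}{r}/m^{r}=\prod_{i=1}^{r-1}(1-i/m)$ by evaluating the Lagrange polynomial of $F$ at the uniform vector, and choosing $m$ large enough pushes the right-hand side above $\lambda_0$. The bound $\pi(\mathcal{F})\leq\lambda_0$ then follows exactly as you say, because the $m$-vertex subgraph of density at least $\lambda_0+c$ produced by the jump hypothesis is \emph{itself} a member of your family $\mathcal{F}$.

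For $(b)\Rightarrow(a)$ the skeleton is right, but one clarification on the "blowup extraction" step you flag as the core. The genuinely hard ingredient is extracting a \emph{balanced} blowup $F(t,\dots,t)\subseteq\mathcal{H}$, with $t\to\infty$, from the $\Omega(n^{|V(F)|})$ copies of $F$ given by supersaturation; this is where the iterated double-counting lives. Once that is done, reaching the Lagrangian-optimizing ratios requires no further supersaturation or pigeonhole: inside $F(t,\dots,t)$ you simply retain $\lfloor t\,x_i^{*}\rfloor$ vertices from the $i$-th class, and the density of the resulting sub-blowup tends to $\lambda(F)\geq\lambda_0+c$ as $t\to\infty$. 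The final averaging step is exact rather than approximate --- the expected density of a uniformly random $m$-subset of any $r$-graph equals the density of the whole graph --- so you do obtain an $m$-vertex subgraph of density at least $\lambda_0+c/2$, giving jump constant $c/2$. With that clarification, your plan yields a complete and correct proof.
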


\begin{definition}\label{DEFINITION:(K,lambda,epsilon)}
Let $r\ge 2$ be an integer, and $(P_t)_{t=1}^{\infty}$ be a sequence of $r$-patterns. We call $(P_t)_{t=1}^{\infty}$ is a $(k, \lambda_0)$-sequence if there exists a positive real number sequence $(\epsilon(t))_{t=1}^{\infty}$ such that the following holds:
\begin{itemize}
  \item [(1)] $\lim\limits_{t\rightarrow \infty}\lambda(P_t)=\lambda_0$,
  \item [(2)] $\lambda(P_t)\geq \lambda_0+\epsilon(t)$ for all $t$, and
  \item [(3)] $\lambda(P_t[S])\leq \lambda_0$ for every $t$ and subset $S\in \binom{[m_t]}{k}$.
\end{itemize}
\end{definition}

We call an $r$-pattern sequence $(P_t)_{t=1}^{\infty}$ is  \emph{non-jump sequence} on $\lambda_0$ if for any constant $k>0$, there exist a constant $t(k)$ such that $(P_t)_{t=t(k)}^{\infty}$ is a $(k, \lambda_0)$-sequence. For $r$-graphs, we have corresponding definitions. We remark that  $\lambda_0$ is a non-jump  through the Frankl-R\"{o}dl method  mentioned in  Introduction is to construct a non-jump $r$-graph sequence $(\mathcal{G}_t)_{t=1}^{\infty}$ on $\lambda_0$. In the following, we show Frankl-R\"{o}dl method also holds for patterns.

\begin{lemma}\label{LEMMA:NON-JUMP-CON-ON-a=a-is-non-jump}
If there exists an $r$-pattern sequence $(P_t)_{t=1}^{\infty}$ which is a non-jump sequence on $\lambda_0$, then $\lambda_0$ is a non-jump for $r$.
\end{lemma}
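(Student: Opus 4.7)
The plan is to argue by contradiction through the Frankl--R\"{o}dl characterisation in Lemma~\ref{LEMMA:jump_condition}. Suppose $\lambda_0$ were a jump for $r$; then there would be a finite family $\mathcal{F}$ of $r$-graphs with $\pi(\mathcal{F})\le \lambda_0$ and $\lambda(F)>\lambda_0$ for every $F\in\mathcal{F}$. Set $k:=\max_{F\in \mathcal{F}}|V(F)|$. Since $(P_t)_{t=1}^{\infty}$ is a non-jump sequence on $\lambda_0$, there is some $t(k)$ such that for every $t\ge t(k)$ we have $\lambda(P_t)\ge\lambda_0+\epsilon(t)>\lambda_0$ and $\lambda(P_t[S])\le \lambda_0$ for each $S\in\binom{[m_t]}{k}$. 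I would note along the way that $m_t\ge k$ for all large $t$: otherwise $m_t$ would be bounded, the possible Lagrangian values of $r$-patterns on at most that many parts would form a finite set, and $\lambda(P_t)$ could not approach $\lambda_0$ strictly from above along a subsequence.

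Fix any such $t$ and turn the gap $\epsilon(t)$ into actual edge density by blowing up $P_t$. Let $(x_1,\dots,x_{m_t})\in\Delta_{m_t-1}$ attain $\lambda(P_t)$ and form the $P_t$-construction $\mathcal{H}_N:=\blow{\mathcal{E}_t}{V_1,\dots,V_{m_t}}$ with $|V_i|\approx x_i N$. A routine computation (approximating $\binom{n_i}{E(i)}\approx n_i^{E(i)}/E(i)!$) gives $d(\mathcal{H}_N)\to \lambda(P_t)\ge \lambda_0+\epsilon(t)$, so once $N$ is sufficiently large, $d(\mathcal{H}_N)>\pi(\mathcal{F})+\epsilon(t)/2$, and by the definition of the Tur\'{a}n density $\mathcal{H}_N$ must contain some $F_0\in\mathcal{F}$ as a subgraph.

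The decisive step is to trap $F_0$ inside an induced subpattern on only $k$ parts. Because $|V(F_0)|\le k$, the embedded copy of $F_0$ intersects at most $k$ of the parts $V_1,\dots,V_{m_t}$, so I can choose $S\in\binom{[m_t]}{k}$ whose indexed parts contain the whole image of $F_0$. Then $F_0$ is a subgraph of the $P_t[S]$-construction $\blow{\mathcal{E}_t[S]}{V_i:i\in S}$, so by the pointwise inequality $\lambda_{F_0}\le \lambda_{\mathcal{G}}$ on $\Delta_{|V(\mathcal{G})|-1}$ valid for any supergraph $\mathcal{G}\supseteq F_0$ on the same vertex set, combined with Lemma~\ref{LEMMA:lambda(P)>lambda(G)}, we obtain $\lambda(F_0)\le \lambda(P_t[S])\le \lambda_0$, contradicting $\lambda(F_0)>\lambda_0$. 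The only mild obstacle is quantifier juggling: one must choose $t$ large enough that both condition~(3) of Definition~\ref{DEFINITION:(K,lambda,epsilon)} is non-trivial and $\epsilon(t)>0$, and then $N$ large enough to cross the Tur\'{a}n threshold $\pi(\mathcal{F})$. Beyond that, the argument is precisely the pattern analogue of the classical Frankl--R\"{o}dl jump lemma.
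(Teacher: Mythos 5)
Your proposal is correct and follows essentially the same route as the paper: contradiction via Lemma~\ref{LEMMA:jump_condition}, setting $k=\max_{F\in\mathcal{F}}|V(F)|$, blowing up $P_{t}$ to cross the Tur\'{a}n threshold, locating a forbidden $F_0$ inside a blowup of an induced subpattern on at most $k$ indices, and concluding $\lambda(F_0)\le\lambda(P_t[S])\le\lambda_0$. In fact you are slightly more careful than the printed proof: you observe that $m_t\ge k$ for large $t$ (so that the set of $k$ indices exists), and you implicitly handle the case where the embedded copy of $F_0$ meets fewer than $k$ parts by padding to a $k$-subset, a point the paper's argument passes over in silence when it writes the profile index set $S_f$ and applies condition (3) of Definition~\ref{DEFINITION:(K,lambda,epsilon)} directly.
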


\begin{proof}
By contradiction, suppose that $\lambda_0$ is a jump for $r$. By Lemma \ref{LEMMA:jump_condition}, there exists a finite collection $\mathcal{F}$ of $r$-graphs satisfying $\pi(\mathcal{F})\leq \lambda_0$ and $\lambda(F)>\lambda_0$ for all $F\in \mathcal{F}$.
Let $k=\max\{|V(F)| : F\in \mathcal{F}\}$. Since $(P_t)_{t=1}^{\infty}$ is a non-jump seuqence on $\lambda_0$, there exist $t(k)$ such that $(P_t)_{t=t(k)}^{\infty}$ is a $(k, \lambda_0)$-sequence. This means there exists a positive constant $\epsilon(t(k))$ such that $\lambda(P_{t(k)})\geq \lambda_0+\epsilon(t(k))$. So, for a sufficiently large  $n$, we can find a $P_{t(k)}$-construction $\mathcal{G}=\mathcal{E}(V_1, \dots, V_m)$ with $n$ vertices and  edge density at least $\lambda_0+\epsilon(t(k))/2$. It follows from the fact $\pi(\mathcal{F})\leq \lambda_0$ that there exists a subset $S\subseteq V(\mathcal{G})$ with $|S|=k$ such that $\mathcal{G}[S]$ contains some $r$-graph $F\in \mathcal{F}$. Suppose that the profiles of $\mathcal{G}[S]$ (with respect to $V_1, \dots, V_m$) is $S_f$. Then $\mathcal{G}[S]$ is a $P_{t(k)}[S_f]$-construction. By Lemma \ref{LEMMA:lambda(P)>lambda(G)} and the fact that $\lambda(P_{t(k)}[S_f])\le \lambda_0$,
\[
\lambda(F)\le \lambda(\mathcal{G}[S])\leq \lambda(P_{t(k)}[S_f])\le \lambda_0,
\]
 a contradiction.
\end{proof}

\subsection{Union of partterns}

In this subsection, we define the union of two patterns and study its properties. To study the hypergraph Tur\'{a}n densities that have arbitrarily large algebraic degree, Liu and Pikhurko \cite{liu2022hypergraph} give an ingenious approach by
defining an operation on the pattern. Let $r\ge 3$ and $s\ge 1$ be two integers, and  $P=(m, \mathcal{E})$ be a $r$-pattern. An $(r+s)$-pattern $P+s=(m+s, \widehat{\mathcal{E}})$ is defined in the following way: for every $E\in \mathcal{E}$ we insert the $s$-set $\{m+1,\ldots,m+s\}$ into $E$, and let $\widehat{\mathcal{E}}$ denote the resulting family of $(r+s)$-multisets. The new pattern $P+s$ has many elegant properties. For examply, if $P$ is minimal, then so is $P+s$, and $\lambda(P+s)$ can be determined completely with respect to $\lambda(P)$. Motivated by it, we define the union of two patterns by the following.

\begin{definition}\label{DEFINITION:union}
Let $r>0$ be an integer. Given two $r$-patterns $P_1=(m_1, \mathcal{E}_1)$ and $P_2=(m_2, \mathcal{E}_2)$,
the union of $P_1$ and $P_2$ on $i$, denoted by $P_1\oplus _{i} P_2$, is a $r$-pattern $(m_1+m_2-1, \widehat{\mathcal{E}})$, such that $\widehat{\mathcal{E}}$ is a collection of $r$-multisets on  $\{1,\ldots,i-1,i_1,\ldots,i_{m_2},r+1,\ldots,m_1\}$ that is defined in the following way:
\begin{itemize}
  \item [(a)] if $E=\{a_1,\ldots,a_r\}$ belongs to $\mathcal{E}_2$, then $\{i_{a_1},\ldots,i_{a_r}\}\in \widehat{\mathcal{E}}$;
  \item [(b)] if $E\in \mathcal{E}_1^{i,0}$, then $E\in  \widehat{\mathcal{E}}$;
  \item [(c)] for every $s\in [r]$ and $E\in \mathcal{E}_1^{i,s}$, $\left(E\setminus i^{(a)}\right)\cup A\in \widehat{\mathcal{E}}$ for all $a$-multisets $A$ in $\{i_1,\ldots,i_{m_2}\}$, where $i^{(a)}$ denotes the multiset $\multiset{\underbrace{i,\ldots,i}_a}$.
\end{itemize}
\end{definition}
\textbf{Remarks.}
\begin{itemize}
\item By the definition of $P_1\oplus_{i} P_2$, we know if an $r$-graph $\mathcal{H}$ is a $(P_1\oplus_{i} P_2)$-construction, then there exists a partition $V(\mathcal{H})= V_1\cup \dots \cup V_{m_1}$ such that the following holds:
\begin{itemize}
  \item [(a)] the induced subgraph $\mathcal{H}[V_{i}]$ is a $P_2$-construction, and
  \item [(b)] $\mathcal{H}\setminus \mathcal{H}[V_{i}]$ consists of all $r$-sets whose profiles are in $\mathcal{E}_1$.
\end{itemize}

\item For a set $T=\{i_1,\ldots,i_t\}\subseteq [m_1]$, we can define the \emph{union} $P_1\oplus_{T} P_2$ of $P_1$ and $P_2$ on $T$ similarly. If an $r$-graph $\mathcal{H}$ is a $(P_1\oplus_{T}P_2)$-construction, then there exists a partition $V(\mathcal{H})= V_1\cup \dots \cup V_{m_1}$ such that the following holds:
\begin{itemize}
  \item [(a)] the induced subgraph $\mathcal{H}[V_{i}]$ is a $P_2$-construction for each $i\in T$, and
  \item [(b)] $\mathcal{H}\setminus \cup_{i\in T}\mathcal{H}[V_{i}]$ consists of all $r$-sets whose profiles are in $\mathcal{E}_1$.
\end{itemize}
\end{itemize}

\begin{lemma}\label{OBSERVATION:lambda(P+P_t)}
Let  $P_1\oplus _{i} P_2 = (m_1+m_2-1, \widehat{\mathcal{E}})$ be the union of two $r$-patterns $P_1=(m_1, \mathcal{E}_1)$ and $P_2=(m_2, \mathcal{E}_2)$ on $i\in[m_1]$, and $x=(x_1, \dots,x_{i-1}, x_{i_1} \dots,x_{i_{m_2}}, x_{i+1}\dots, x_{m_1})\in \Delta_{m_1+m_2-1}$. Then
\[
\lambda_{\widehat{\mathcal{E}}}(x)=\lambda_{\mathcal{E}_1}\left(x_1, \dots, x_{m_1}\right)+\lambda_{\mathcal{E}_2}(x_{i_1}, \dots, x_{i_{m_2}}),
\]
where $x_i=\sum_{j=1}^{m_2}x_{i_j}$.
\end{lemma}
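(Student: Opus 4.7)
The plan is to split the Lagrange polynomial of $\widehat{\mathcal{E}}$ according to the three types of multisets in Definition \ref{DEFINITION:union}, show that type (a) contributes exactly $\lambda_{\mathcal{E}_2}(x_{i_1},\ldots,x_{i_{m_2}})$, and show that types (b) and (c) together telescope via the multinomial theorem to give $\lambda_{\mathcal{E}_1}(x_1,\ldots,x_{m_1})$ once we substitute $x_i = \sum_{j=1}^{m_2} x_{i_j}$.

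First I would write $\lambda_{\widehat{\mathcal{E}}}(x) = A + B + C$ where $A$, $B$, $C$ collect the contributions from the multisets constructed in parts (a), (b), (c) of Definition \ref{DEFINITION:union}, respectively. Part (a) is immediate: the map $E=\{a_1,\ldots,a_r\}\mapsto\{i_{a_1},\ldots,i_{a_r}\}$ is a bijection from $\mathcal{E}_2$ onto the type-(a) multisets that preserves multiplicities (relabelled to the variables $x_{i_1},\ldots,x_{i_{m_2}}$), so $A=\lambda_{\mathcal{E}_2}(x_{i_1},\ldots,x_{i_{m_2}})$. Part (b) is also immediate, contributing $B = r!\sum_{E\in \mathcal{E}_1^{i,0}}\prod_{j}\frac{x_j^{E(j)}}{E(j)!}$, which matches the corresponding terms of $\lambda_{\mathcal{E}_1}$ evaluated at $(x_1,\ldots,x_{m_1})$.

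The main computation is part (c). For each $s\in[r]$ and each $E\in \mathcal{E}_1^{i,s}$, the new multisets are $(E\setminus i^{(s)})\cup A$ as $A$ ranges over all $s$-multisets on $\{i_1,\ldots,i_{m_2}\}$. Their joint contribution to $\lambda_{\widehat{\mathcal{E}}}(x)$ equals
\begin{equation*}
r!\prod_{j\neq i}\frac{x_j^{E(j)}}{E(j)!}\cdot\sum_{A}\prod_{k=1}^{m_2}\frac{x_{i_k}^{A(i_k)}}{A(i_k)!},
\end{equation*}
where the inner sum runs over $s$-multisets $A$ on $\{i_1,\ldots,i_{m_2}\}$. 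The multinomial theorem gives $\sum_{A}\prod_{k=1}^{m_2}\frac{x_{i_k}^{A(i_k)}}{A(i_k)!}=\frac{(x_{i_1}+\cdots+x_{i_{m_2}})^s}{s!}=\frac{x_i^s}{s!}$ with the convention $x_i:=\sum_{j=1}^{m_2}x_{i_j}$. Substituting back and combining over all $s\in[r]$ shows that $B+C=\lambda_{\mathcal{E}_1}(x_1,\ldots,x_{m_1})$, completing the identity.

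I do not expect a serious obstacle; the only place to be careful is that one must sum over all $s$-multisets on $\{i_1,\ldots,i_{m_2}\}$ (with repetitions allowed) rather than over $s$-sets, so that the multinomial expansion is literally $\frac{x_i^s}{s!}$ with no extra combinatorial correction. Once this bookkeeping is done, the three pieces assemble to give the claimed decomposition $\lambda_{\widehat{\mathcal{E}}}(x)=\lambda_{\mathcal{E}_1}(x_1,\ldots,x_{m_1})+\lambda_{\mathcal{E}_2}(x_{i_1},\ldots,x_{i_{m_2}})$.
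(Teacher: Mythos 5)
Your proof is correct and takes essentially the same approach as the paper: decompose $\lambda_{\widehat{\mathcal{E}}}$ according to cases (a)--(c) of Definition~\ref{DEFINITION:union} and collapse the inner sum over $s$-multisets $A$ in case (c) via the multinomial theorem, using that $E(i)=s$ to reassemble the $x_i^s/s!$ factor of $\lambda_{\mathcal{E}_1}$. Your handling of the factorials is the cleaner version of this step --- the paper abbreviates the per-multiset Lagrange factor to $x_A/s!$ and states $\sum_A x_A = x_i^s$ with $x_A=\prod_{a\in A}x_a$, which as literally written is the complete homogeneous symmetric polynomial rather than the $s$-th power, whereas your identity $\sum_A \prod_{k} x_{i_k}^{A(i_k)}/A(i_k)! = x_i^s/s!$ is the precise multinomial form --- but both arrive at the same conclusion.
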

\begin{proof}
For a multisets $A$ on $\{i_1,\ldots,i_{m_2}\}$, let $x_A=\prod_{a\in A}x_a$. Then for $s\in [r]$
\begin{align}\label{sum-xA}
\sum_{A \text{ is a } s\text{-multisets  on  }  \{i_1,\ldots,i_{m_2}\}}x_A=(x_{i_1}+\ldots+x_{i_{m_2}})^s=x_i^s
\end{align}

Considering the $r$-multisets in $\widehat{\mathcal{E}}$, we have
\begin{align}\label{lambda-hat-E}
\lambda_{\widehat{\mathcal{E}}}(x)=r!\left(f(x)+g(x)\right)
\end{align}
where
\begin{align}\label{f}
f(x)=\sum_{E=\{a_1,\ldots,a_r\}\in \mathcal{E}_2}\prod_{j=1}^{m_2}\frac{x_{i_j}^{E(j)}}{E(j)!}=\frac{\lambda_{\mathcal{E}_2}(x_{i_1}, \dots, x_{i_m})}{r!},
\end{align}

\begin{align}\label{g}
g(x)&=\sum_{E\in \mathcal{E}_1^{i,0}}\prod_{j=1}^{m_1}\frac{x_{j}^{E(j)}}{E(j)!}+ \sum_{s=1}^r\sum_{E\in \mathcal{E}_1^{i,s}}\sum_{A \text{ is a } s\text{-multisets  on  }  \{i_1,\ldots,i_{m_2}\}}\left(\prod_{j=1,j\neq i}^{m_1}\frac{x_{j}^{E(j)}}{E(j)!}\right)\times \frac{x_A}{s!}\notag \\
&=\sum_{E\in \mathcal{E}_1^{i,0}}\prod_{j=1}^{m_1}\frac{x_{j}^{E(j)}}{E(j)!}+ \sum_{s=1}^r\sum_{E\in \mathcal{E}_1^{i,s}}\prod_{j=1}^{m_1}\frac{x_{j}^{E(j)}}{E(j)!}\notag\\
&=\frac{\lambda_{\mathcal{E}_1}\left(x_1, \dots, x_{m_1}\right)}{r!}.
\end{align}
Note that the second equation in $g(x)$ hold by \eqref{sum-xA}. Combining \eqref{lambda-hat-E}, \eqref{f} and \eqref{g}, we complete the proof.
\end{proof}

For a subset we can also get the following similar result by  Lemma \ref{OBSERVATION:lambda(P+P_t)} and induction on the size of the subset.

\begin{corollary}\label{OBSERVATION:lambda(P+P_t)-subset}
Let $T\subseteq [m_1]$, $P_1=(m_1, \mathcal{E}_1)$ and $P_2=(m_2, \mathcal{E}_2)$ be two $r$-patterns. If $P_1\oplus _{T} P_2 = (m_1+|T|(m_2-1), \widehat{\mathcal{E}})$ is the union of $P_1$ and $P_2$ on $T$ and $x \in \Delta_{m_1+|T|(m_2-1)}$, then
\[
\lambda_{\widehat{\mathcal{E}}}(x)=\lambda_{\mathcal{E}_1}\left(x_1, \dots, x_{m_1}\right)+\sum_{i\in T}\lambda_{\mathcal{E}_2}(x_{i_1}, \dots, x_{i_{m_2}}),
\]
where $x_i=\sum_{j=1}^{m_2}x_{i_j}$ for each $i\in T$.
\end{corollary}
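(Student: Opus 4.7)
The plan is to prove the corollary by induction on $|T|$, using Lemma \ref{OBSERVATION:lambda(P+P_t)} as both the base case and the engine of the inductive step. The key conceptual point is that the union on a subset can be realized as an iterated union on single indices: if $T = \{j_1, \ldots, j_t\} \subseteq [m_1]$, then setting $Q^{(0)} := P_1$ and $Q^{(k)} := Q^{(k-1)} \oplus_{j_k} P_2$ for $1 \le k \le t$, one checks directly from Definition \ref{DEFINITION:union} that $Q^{(t)} = P_1 \oplus_T P_2$. Here the index $j_k$ is still present in $Q^{(k-1)}$ because the earlier operations $\oplus_{j_1}, \ldots, \oplus_{j_{k-1}}$ only expanded distinct indices of $[m_1]$ and left the others untouched.

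The base case $|T|=1$ is precisely Lemma \ref{OBSERVATION:lambda(P+P_t)}. For the inductive step, assume the formula holds for any subset of size $t-1$ and write $T = T' \cup \{j_t\}$ with $|T'| = t-1$. Given $x \in \Delta_{m_1+t(m_2-1)}$, with coordinates $x_\ell$ for $\ell \in [m_1]\setminus T$ and $x_{j_k,s}$ for $j_k \in T$, $s \in [m_2]$, apply Lemma \ref{OBSERVATION:lambda(P+P_t)} to the pair $(Q^{(t-1)}, P_2)$ at index $j_t$ to obtain
\begin{equation*}
\lambda_{\widehat{\mathcal{E}}}(x) = \lambda_{\widehat{\mathcal{E}}^{(t-1)}}(y) + \lambda_{\mathcal{E}_2}(x_{j_t,1}, \ldots, x_{j_t,m_2}),
\end{equation*}
where $y$ is the vector indexed by the vertices of $Q^{(t-1)} = P_1 \oplus_{T'} P_2$, and the coordinate of $y$ at the old index $j_t$ equals $x_{j_t} := \sum_{s=1}^{m_2} x_{j_t,s}$. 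Now apply the inductive hypothesis to $Q^{(t-1)}$ evaluated at $y$; this expands $\lambda_{\widehat{\mathcal{E}}^{(t-1)}}(y)$ as $\lambda_{\mathcal{E}_1}(x_1, \ldots, x_{m_1}) + \sum_{k=1}^{t-1} \lambda_{\mathcal{E}_2}(x_{j_k,1}, \ldots, x_{j_k,m_2})$, and summing with the last term yields the claimed identity.

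The proof is essentially bookkeeping, so there is no substantive obstacle; the only mildly delicate point is verifying that the iterated-union identification $Q^{(t)} = P_1 \oplus_T P_2$ respects Definition \ref{DEFINITION:union} exactly, i.e.\ that expanding the indices of $T$ one at a time produces the same multisets as expanding them simultaneously. This follows because at each step the $r$-multisets created by $\oplus_{j_k}$ only involve the new variables $\{(j_k)_1, \ldots, (j_k)_{m_2}\}$ together with coordinates of $[m_1] \setminus T$, so the operations at different indices commute and combine without interference.
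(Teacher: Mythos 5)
Your proof is correct and follows exactly the route the paper sketches: the paper gives no written-out argument but states that the corollary follows ``by Lemma~\ref{OBSERVATION:lambda(P+P_t)} and induction on the size of the subset,'' which is precisely your iterated-union induction. (A minor imprecision in your last paragraph: after $\oplus_{j_1}$ the created multisets may still contain the not-yet-expanded indices $j_2,\dots,j_t\in T$, not only $[m_1]\setminus T$ and the new block; the commutativity nonetheless holds because expanding at $j_k$ never changes the multiplicity at any other index, so the conclusion is unaffected.)
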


The following interesting result shows that the Lagrangian of $P_1\oplus _{i} P_2$ is not related to the structure of $P_2$.
\begin{lemma}\label{CLAIM:lambda(p_t)=varphi(lambda(p_t))}
Let  $P_1\oplus _{i} P_2 = (m_1+m_2-1, \widehat{\mathcal{E}})$ be the union of two $r$-patterns $P_1=(m_1, \mathcal{E}_1)$ and $P_2=(m_2, \mathcal{E}_2)$ on $i\in[m_1]$ with $\lambda=\lambda(P_2)$. Then
\[
\lambda(P_1\oplus_i P_2)=\max\{ \lambda_{\mathcal{E}_1}(x)+\lambda x_i^r : x=(x_1, \dots, x_{m_1})\in \Delta_{m_1-1}\}.
\]
\end{lemma}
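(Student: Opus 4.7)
I will evaluate the maximum defining $\lambda(P_1 \oplus_i P_2)$ by first applying the decomposition from Lemma~\ref{OBSERVATION:lambda(P+P_t)} and then optimizing in two stages. For any vector $x$ of nonnegative coordinates $(x_1, \ldots, x_{i-1}, x_{i_1}, \ldots, x_{i_{m_2}}, x_{i+1}, \ldots, x_{m_1})$ summing to $1$, that lemma gives
\[
\lambda_{\widehat{\mathcal{E}}}(x) = \lambda_{\mathcal{E}_1}(x_1, \ldots, x_{m_1}) + \lambda_{\mathcal{E}_2}(x_{i_1}, \ldots, x_{i_{m_2}}),
\]
where $x_i := \sum_{j=1}^{m_2} x_{i_j}$. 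The two summands depend on disjoint blocks of free variables once the marginal $x_i$ is fixed, so I can maximize over the subdivision of the mass $x_i$ among $x_{i_1}, \ldots, x_{i_{m_2}}$ independently of $(x_1, \ldots, x_{m_1})$.

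For the inner maximization, fix the marginal $t = x_i \geq 0$. When $t = 0$ all subcoordinates vanish and both $\lambda_{\mathcal{E}_2}(x_{i_1}, \ldots, x_{i_{m_2}})$ and $\lambda t^r$ are zero. For $t > 0$, the substitution $y_j = x_{i_j}/t$ places $y = (y_1, \ldots, y_{m_2})$ in $\Delta_{m_2-1}$, and the degree-$r$ homogeneity of $\lambda_{\mathcal{E}_2}$ gives
\[
\lambda_{\mathcal{E}_2}(x_{i_1}, \ldots, x_{i_{m_2}}) = t^r \lambda_{\mathcal{E}_2}(y_1, \ldots, y_{m_2}) \leq t^r \lambda(P_2) = \lambda x_i^r.
\]
Equality is attained by choosing $y$ to be any optimizer of $\lambda_{\mathcal{E}_2}$ on $\Delta_{m_2-1}$, so the maximum of the $\mathcal{E}_2$-term over all admissible subdivisions of $t$ equals exactly $\lambda t^r$.

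Taking the outer maximum over $(x_1, \ldots, x_{m_1}) \in \Delta_{m_1-1}$ (which exists by compactness, as already noted in the excerpt) yields
\[
\lambda(P_1 \oplus_i P_2) = \max\{\lambda_{\mathcal{E}_1}(x) + \lambda x_i^r : x \in \Delta_{m_1-1}\},
\]
as claimed. The argument is essentially a clean application of homogeneity together with the decomposition lemma; the only technical item is verifying that the inner and outer maxima can be separated, which holds because the $\mathcal{E}_1$-term involves the marginal $x_i$ but not the individual split, while the $\mathcal{E}_2$-term depends only on the split. I do not anticipate any real obstacle beyond this bookkeeping, which is precisely why the conclusion depends on $P_2$ only through $\lambda(P_2)$, as the lemma emphasizes.
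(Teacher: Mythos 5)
Your proof is correct and follows essentially the same route as the paper: apply Lemma~\ref{OBSERVATION:lambda(P+P_t)} to decompose $\lambda_{\widehat{\mathcal{E}}}$, then use degree-$r$ homogeneity of $\lambda_{\mathcal{E}_2}$ to show the inner maximum over the split of $x_i$ is exactly $\lambda x_i^r$. The paper phrases this as two separate inequalities (upper bound via homogeneity, lower bound via an explicit construction $x_{i_j} = y_i z_j$), while you present it as a nested two-stage optimization, but the underlying computations are identical.
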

\begin{proof}
Let
\[
f(\lambda)=\max\{ \lambda_{\mathcal{E}_1}(x)+\lambda x_i^r : x=(x_1, \dots, x_{m_1})\in \Delta_{m_1-1}\}.
\]
By Weierstra\ss's Theorem, $f(\lambda)$ is well-defined.
Firstly, we prove $\lambda(P_1\oplus_i P_2)\leq f(\lambda)$. Suppose that $x=(x_1, \dots,x_{i-1}, x_{i_1} \dots,x_{i_{m_2}}, x_{i+1}\dots, x_{m_1})\in \Delta_{m_1+m_2-1}$ satisfies $\lambda(P_1\oplus_i P_2)=\lambda_{\widehat{\mathcal{E}}}(x)$. Let $x_i=\sum_{j=1}^{m_2}x_{i_j}$. Then $(x_1, \dots, x_{m_1})\in\Delta_{m-1}$, and so by Lemma  \ref{OBSERVATION:lambda(P+P_t)},
\begin{align}\label{2.2-again}
\lambda_{\widehat{\mathcal{E}}}(x)=\lambda_{\mathcal{E}_1}\left(x_1, \dots, x_{m_1}\right)+\lambda_{\mathcal{E}_2}(x_{i_1}, \dots, x_{i_{m_2}}),
\end{align}
If $x_i=0$, then $\lambda_{\mathcal{E}_2}(x_{i_1}, \dots, x_{i_{m_2}})=0$, and so
\[
\lambda_{\widehat{\mathcal{E}}}(x)=\lambda_{\mathcal{E}_1}\left(x_1, \dots, x_{m_1}\right)\leq f(\lambda),
\]
we are done. Suppose that $x_i>0$. Then $y=(\frac{x_{i_1}}{x_i}, \dots, \frac{x_{i_{m_2}}}{x_i})\in \Delta_{m_2-1}$. Since $\lambda_{\mathcal{E}_2}(x_{i_1}, \dots, x_{i_{m_2}})$ is homogenous, we have
\[
\lambda_{\mathcal{E}_2}(x_{i_1}, \dots, x_{i_{m_2}})=
\lambda_{\mathcal{E}_2}\left(\frac{x_{i_1}}{x_i}, \dots, \frac{x_{i_{m_2}}}{x_i}\right)x_{i}^r\leq \lambda x_{i}^r,
\]
which together with \eqref{2.2-again} yields that
\[
\lambda_{\widehat{\mathcal{E}}}(x)\le \lambda_{\mathcal{E}_1}\left(x_1, \dots, x_{m_1}\right)+\lambda x_{i}^r\leq f(\lambda).
\]

Next, we prove the other side, i.e., $\lambda(P_1\oplus_i P_2)\geq f(\lambda)$. Suppose that $y=(y_1,\dots, y_{m_1})$ is a vector in $\Delta_{m_1-1}$ such that $\lambda_{\mathcal{E}}(y)+\lambda y_i^r=f(\lambda)$. If $y_i=0$, then
\[
f(\lambda)=\lambda_{\mathcal{E}_1}(y)\leq \lambda(P_1)\leq \lambda(P_1\oplus_i P_2),
\]
we are done. Assume that $y_i>0$. Let $z=(z_1, \dots, z_{m_2})$ be a vector in $\Delta_{m_2-1}$ that maximizes $\lambda_{\mathcal{E}_2}(x)$. For $j\in [m_1]$ and $j\neq i$, let $x_i=y_i$. For $j\in[m_2]$, let $x_{i_j}=y_i\times z_{j}$.
Then $x=(x_1, \dots,x_{i-1}, x_{i_1} \dots,x_{i_{m_2}}, x_{i+1}\dots, x_{m_1})\in \Delta_{m_1+m_2-1}$. Again, by Lemma \ref{OBSERVATION:lambda(P+P_t)}, we have
\begin{align}
\lambda(P_1\oplus_i P_2)&\geq \lambda_{\widehat{\mathcal{E}}}(x)\notag\\
&=\lambda_{\mathcal{E}_1}\left(x_1, \dots, x_{m_1}\right)+\lambda_{\mathcal{E}_2}(x_{i_1}, \dots, x_{i_{m_2}})\notag\\
&=\lambda_{\mathcal{E}_1}\left(x_1, \dots, x_{m_1}\right)+\lambda_{\mathcal{E}_2}\left(z_1,\dots, z_{m_2}\right)y_i^r\notag\\
&=\lambda_{\mathcal{E}}\left(y_1,\dots,y_{m}\right)+\lambda y_i^r \notag\\
&=f(\lambda).\notag
\end{align}
This completes the proof.
\end{proof}

If we use Corollary \ref{OBSERVATION:lambda(P+P_t)-subset} and the idea in the proof of Lemma \ref{CLAIM:lambda(p_t)=varphi(lambda(p_t))}, then we have

\begin{corollary}\label{CLAIM:lambda(p_t)=varphi(lambda(p_t))-subset}
Let  $P_1\oplus _{T} P_2 = (m_1+m_2-1, \widehat{\mathcal{E}})$ be the union of two $r$-patterns $P_1=(m_1, \mathcal{E}_1)$ and $P_2=(m_2, \mathcal{E}_2)$ on $T\subseteq [m_1]$ with $\lambda=\lambda(P_2)$. Then
\[
\lambda(P_1\oplus_T P_2)=\max\left\{ \lambda_{\mathcal{E}_1}(x)+\lambda \left(\sum_{i\in T}x_i^r\right) : x=(x_1, \dots, x_{m_1})\in \Delta_{m_1-1}\right\}.
\]
\end{corollary}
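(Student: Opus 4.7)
My plan is to follow the two-sided argument used for Lemma~\ref{CLAIM:lambda(p_t)=varphi(lambda(p_t))} essentially verbatim, replacing the single-index decomposition from Lemma~\ref{OBSERVATION:lambda(P+P_t)} by its subset analog Corollary~\ref{OBSERVATION:lambda(P+P_t)-subset}. Write $f(\lambda) := \max\{\lambda_{\mathcal{E}_1}(x) + \lambda\sum_{i\in T}x_i^r : x \in \Delta_{m_1-1}\}$; by Weierstra\ss's theorem this maximum is attained. I would then establish $\lambda(P_1\oplus_T P_2) \le f(\lambda)$ and $\lambda(P_1\oplus_T P_2) \ge f(\lambda)$ separately.

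For the upper bound, take a maximizer $x$ of $\lambda_{\widehat{\mathcal{E}}}$ on the simplex whose coordinates are indexed by $([m_1]\setminus T) \cup \{i_j : i\in T,\ j\in [m_2]\}$. Apply Corollary~\ref{OBSERVATION:lambda(P+P_t)-subset} to write $\lambda_{\widehat{\mathcal{E}}}(x) = \lambda_{\mathcal{E}_1}(x_1,\dots,x_{m_1}) + \sum_{i\in T}\lambda_{\mathcal{E}_2}(x_{i_1},\dots,x_{i_{m_2}})$, where $x_i := \sum_{j=1}^{m_2} x_{i_j}$ for each $i \in T$. For each $i \in T$ with $x_i > 0$, the rescaled vector $(x_{i_1}/x_i,\dots,x_{i_{m_2}}/x_i)$ lies in $\Delta_{m_2-1}$, so by the degree-$r$ homogeneity of $\lambda_{\mathcal{E}_2}$ we obtain $\lambda_{\mathcal{E}_2}(x_{i_1},\dots,x_{i_{m_2}}) \le \lambda\, x_i^r$; the bound is trivial when $x_i = 0$. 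Since $(x_1,\dots,x_{m_1}) \in \Delta_{m_1-1}$, summing over $i \in T$ gives $\lambda_{\widehat{\mathcal{E}}}(x) \le f(\lambda)$.

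For the lower bound, let $y = (y_1,\dots,y_{m_1}) \in \Delta_{m_1-1}$ attain $f(\lambda)$ and let $z = (z_1,\dots,z_{m_2}) \in \Delta_{m_2-1}$ attain $\lambda(P_2) = \lambda$. I would construct a feasible vector $x$ on the enlarged index set by setting $x_j := y_j$ for $j \in [m_1]\setminus T$ and $x_{i_j} := y_i z_j$ for every $i \in T$ and $j \in [m_2]$; its coordinates sum to $1$. Corollary~\ref{OBSERVATION:lambda(P+P_t)-subset} together with homogeneity of $\lambda_{\mathcal{E}_2}$ then yields $\lambda_{\widehat{\mathcal{E}}}(x) = \lambda_{\mathcal{E}_1}(y) + \sum_{i \in T}\lambda_{\mathcal{E}_2}(z)\,y_i^r = \lambda_{\mathcal{E}_1}(y) + \lambda\sum_{i\in T}y_i^r = f(\lambda)$, and therefore $\lambda(P_1\oplus_T P_2) \ge f(\lambda)$.

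Because the statement is a direct generalization of the single-index case, I do not anticipate a substantive mathematical obstacle. The only point requiring care is the index bookkeeping in the lower bound construction: we must scale the \emph{same} optimizer $z$ of $\lambda_{\mathcal{E}_2}$ by each weight $y_i$ simultaneously for every $i \in T$, rather than using independent maximizers on the different blown-up blocks, since Corollary~\ref{OBSERVATION:lambda(P+P_t)-subset} only sums the resulting Lagrangian values and does not link the local scalings.
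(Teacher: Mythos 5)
Your proof is correct and follows exactly the route the paper indicates: replace the single-index decomposition (Lemma~\ref{OBSERVATION:lambda(P+P_t)}) with its subset analog (Corollary~\ref{OBSERVATION:lambda(P+P_t)-subset}) and repeat the two-sided argument of Lemma~\ref{CLAIM:lambda(p_t)=varphi(lambda(p_t))}. One small quibble with your closing remark: using the \emph{same} maximizer $z$ of $\lambda_{\mathcal{E}_2}$ across all $i\in T$ is convenient but not actually required, since any collection of (possibly different) maximizers would each contribute $\lambda\,y_i^r$; this does not affect the correctness of your argument.
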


\section{The proof of Theorem \ref{THEOREM:main-theorem}}

In this section, we prove Theorem \ref{THEOREM:main-theorem} using the following result.
\begin{theorem}\label{THEOREM:new_nonjump}
Let  $P=(m, \mathcal{E})$ be a minimal pattern with $\lambda(P)< 1$, and  $(P_t=(m_t, \mathcal{E}_t))_{t=1}^{\infty}$ be a $(k, \lambda_0)$-sequence. Then $(P\oplus_i P_t)_{t=1}^{\infty}$ is a $(k, f(\lambda_0))$-sequence for every $i\in [m]$, where
\[
f(\lambda_0)=\max\{ \lambda_{\mathcal{E}}(x)+\lambda_0x_m^r : x=(x_1, \dots, x_m)\in \Delta_{m-1}\}.
\]
\end{theorem}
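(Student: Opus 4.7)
The plan is to verify the three conditions of Definition \ref{DEFINITION:(K,lambda,epsilon)} for the sequence $(P \oplus_i P_t)_{t=1}^{\infty}$ at the value $f(\lambda_0)$. The starting point is Lemma \ref{CLAIM:lambda(p_t)=varphi(lambda(p_t))}, which gives the explicit formula
\[
\lambda(P \oplus_i P_t) \;=\; \max\left\{\lambda_{\mathcal{E}}(x) + \lambda(P_t)\, x_i^r : x \in \Delta_{m-1}\right\}.
\]
The right-hand side is the pointwise maximum of a family of linear functions in the scalar $\lambda(P_t)$ over the fixed compact simplex, hence a continuous function of $\lambda(P_t)$. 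Thus condition $(1)$, namely $\lambda(P \oplus_i P_t) \to f(\lambda_0)$, is immediate from $\lambda(P_t) \to \lambda_0$.

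The interesting step is condition $(2)$, where minimality of $P$ enters. Fix a maximizer $x^* \in \Delta_{m-1}$ of $\lambda_{\mathcal{E}}(x) + \lambda_0 x_i^r$; the claim is that $x^*_i > 0$. If instead $x^*_i = 0$, then $x^*$ is supported on $[m]\setminus\{i\}$, so $f(\lambda_0) = \lambda_{\mathcal{E}}(x^*) \leq \lambda(P-i)$. On the other hand, letting $y \in \Delta_{m-1}$ be a maximizer of $\lambda_{\mathcal{E}}$, minimality of $P$ gives $y_i > 0$, and
\[
f(\lambda_0) \;\geq\; \lambda_{\mathcal{E}}(y) + \lambda_0 y_i^r \;\geq\; \lambda(P) \;>\; \lambda(P - i),
\]
a contradiction. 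With $x^*_i > 0$ established, evaluating at $x^*$ yields
\[
\lambda(P \oplus_i P_t) \;\geq\; \lambda_{\mathcal{E}}(x^*) + \lambda(P_t)(x^*_i)^r \;=\; f(\lambda_0) + (\lambda(P_t) - \lambda_0)(x^*_i)^r \;\geq\; f(\lambda_0) + \epsilon(t)(x^*_i)^r,
\]
which verifies $(2)$ with the new positive error sequence $\epsilon'(t) := \epsilon(t)(x^*_i)^r$.

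Condition $(3)$ reduces to identifying the relevant induced subpattern. Given a $k$-subset $S$ of the index set of $P \oplus_i P_t$, write $S = T \sqcup S'$ with $T \subseteq [m]\setminus\{i\}$ and $S' \subseteq \{i_1,\dots,i_{m_t}\}$, and set $T^+ := T \cup \{i\}$ and $S'^* := \{j\in[m_t] : i_j \in S'\}$. A clause-by-clause inspection of Definition \ref{DEFINITION:union} shows that
\[
(P \oplus_i P_t)[S] \;=\; P[T^+] \oplus_i P_t[S'^*]
\]
(in the degenerate case $S' = \emptyset$ the induced pattern is simply $P[T]$, which satisfies the bound trivially). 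Applying Lemma \ref{CLAIM:lambda(p_t)=varphi(lambda(p_t))} to this factored union together with the bound $\lambda(P_t[S'^*]) \leq \lambda_0$ (obtained from condition $(3)$ for $(P_t)$ by extending $S'^*$ to a $k$-subset of $[m_t]$ and invoking the monotonicity Fact) gives
\[
\lambda((P \oplus_i P_t)[S]) \;\leq\; \max\{\lambda_{\mathcal{E}[T^+]}(x) + \lambda_0 x_i^r : x \in \Delta_{|T^+|-1}\} \;\leq\; f(\lambda_0),
\]
where the final inequality comes from extending any such $x$ by zeros on $[m]\setminus T^+$ to a point of $\Delta_{m-1}$.

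I expect the main technical obstacle to be the factorization $(P \oplus_i P_t)[S] = P[T^+] \oplus_i P_t[S'^*]$: the three clauses of Definition \ref{DEFINITION:union} generate different kinds of multisets (purely inside the blown-up part, purely outside it, and mixed), and each type must be carefully matched against the multisets induced on $S$. Once that bookkeeping is settled, the rest of the proof is a clean combination of Lemma \ref{CLAIM:lambda(p_t)=varphi(lambda(p_t))} with the defining properties of the $(k,\lambda_0)$-sequence and the minimality of $P$.
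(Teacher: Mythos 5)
Your proof is correct and follows essentially the same route as the paper: verify conditions (1) and (2) via Lemma~\ref{CLAIM:lambda(p_t)=varphi(lambda(p_t))}, using minimality of $P$ to force $x_i^*>0$, and verify condition (3) by decomposing the Lagrangian of the induced subpattern and bounding the $P_t$-part by $\lambda_0$. Your factorization $(P\oplus_i P_t)[S]=P[T^+]\oplus_i P_t[S'^*]$ packages the paper's extend-by-zeros computation a little more cleanly, and your chain $f(\lambda_0)\geq\lambda(P)>\lambda(P-i)$ makes explicit a contradiction the paper leaves terse, but these are expository rather than substantive differences.
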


\begin{proof}
By symmetry, it suffices to show $(P\oplus_m P_t)_{t=1}^{\infty}$ is a $(k, f(\lambda_0))$-sequence. Let $P\oplus_m P_t=(m+m_t-1, \widehat{\mathcal{E}}_t)$. For $x=(x_1, \dots, x_m)\in \Delta_{m-1}$ and $\lambda\in [0, 1)$, we define the function
\begin{align}\label{EQUATION:phi(x,lambda)}
\phi(x, \lambda)=\lambda_{\mathcal{E}}(x)+\lambda x_m^r.
\end{align}
For $\lambda_1, \lambda_2\in [0,1)$,
\[
f(\lambda_1)=\max_{x\in \Delta_{m-1}}\Big(\phi(x, \lambda_1)-\phi(x, \lambda_2)+\phi(x, \lambda_2)\Big)\le \max_{x\in \Delta_{m-1}}|\phi(x, \lambda_1)-\phi(x, \lambda_2)|+f(\lambda_2).
\]
By symmetry,
\[
f(\lambda_2)\le \max_{x\in \Delta_{m-1}}|\phi(x, \lambda_1)-\phi(x, \lambda_2)|+f(\lambda_1).
\]
This means that
\[
|f(\lambda_1)-f(\lambda_2)|\le \max_{x\in \Delta_{m-1}}|\phi(x, \lambda_1)-\phi(x, \lambda_2)|\le |\lambda_1-\lambda_2|.
\]
Thus, $f(\lambda)$ is a  continuous function of $\lambda\in [0, 1)$, which together with $\lim_{t\rightarrow \infty}\lambda(P_t)=\lambda_0$ yields
\begin{align}\label{continuousfunction-f}
\lim_{t\rightarrow \infty}f(\lambda(P_t))=f\left(\lim_{t\rightarrow \infty}\lambda(P_t)\right)=f(\lambda_0).
\end{align}
Combining \eqref{continuousfunction-f} and Lemma  \ref{CLAIM:lambda(p_t)=varphi(lambda(p_t))}, we have $\lim_{t\rightarrow \infty}\lambda(P\oplus_m P_t)=f(\lambda_0)$.

Now we verify (2) of Definition \ref{DEFINITION:union}.
Recall that  $(P_t)_{t=1}^{\infty}$ is a $(k, \lambda_0)$-sequence. There exists a positive real number sequence $(\epsilon(t))_{t=1}^{\infty}$ such that for all $t$, $\lambda(P_t)\geq \lambda_0+\epsilon(t)$. Notice that $\phi(x, \lambda)$ is  linear as a function of  $\lambda$. The function $f(\lambda)$ is non-decreasing. By Claim \ref{CLAIM:lambda(p_t)=varphi(lambda(p_t))}, we have
\begin{align}\label{EQUATION:lambda_(P+P_t)>lambda_(lambda_0+c)}
\lambda(P\oplus_m P_t)=f(\lambda(P_t))\geq f(\lambda_0+\epsilon(t)).
\end{align}
So if  $x=(x_{1}\dots, x_{m})\in \Delta_{m-1}$ is a vector that maximizes $\phi(x, \lambda_0)$, then
\begin{align}\label{EQUATION:lambda(P+P_t)>varphi(lambda_0)+C'}
\lambda(P\oplus_m P_t)\geq f(\lambda_0+\epsilon(t))\geq \phi(x, \lambda_0+\epsilon(t))= f(\lambda_0)+\epsilon(t)x_{m}^r.
\end{align}
Notice that $x_{m}>0$, since otherwise,  $\phi(x, \lambda_0)=\lambda_{\mathcal{E}}(x)< \lambda(P)$ by the fact that $P$ is  minimal, a contradiction. Thus, $\lambda(P\oplus_m P_t)\geq f(\lambda_0)+\epsilon'(t)$ by letting $\epsilon'(t)=\epsilon(t)x_{m}^r$.

In the following, we prove that $\lambda((P\oplus_m P_t)[S])\leq f(\lambda_0)$ for every $S\in \binom{[m+m_t-1]}{k}$. Fix a subset $S=\{s_1, \dots, s_k\}\in \binom{[m+n_t-1]}{k}$. Let $(P\oplus_m P_t)[S]=(k, \widehat{\mathcal{E}}_t[S])$. Now we show $\lambda((P\oplus_m P_t)[S])\leq f(\lambda_0)$. Let $(x_{s_1}, \dots, x_{s_k})\in \Delta_{k-1}$ be a vector that maximizes $\lambda_{\widehat{\mathcal{E}}_t[S]}(x_{s_1}, \dots, x_{s_k})$. Let $x'_i=x_i$ for $i\in S$, $x'_i=0$ for $i\in [m+n_t-1]\setminus S$, and $y=\sum_{i=m}^{m+m_t-1}x'_i$. Then $(x'_1, \dots, x'_{m+m_t-1})\in \Delta_{m+m_t-2}$, and by Observation \ref{OBSERVATION:lambda(P+P_t)}, we have
\begin{align}\label{EQUATION:lambda_(P_S)}
\lambda((P\oplus_m P_t)[S]) &=\lambda_{\widehat{\mathcal{E}}_{t}[S]}(x_{s_1},\dots, x_{s_k})\notag\\
&=\lambda_{\widehat{\mathcal{E}}_t}(x'_1,\dots, x'_{m+m_t-1})\notag\\
&=\lambda_{\mathcal{E}}(x'_1, \dots, x'_{m-1},y)+\lambda_{\mathcal{E}_{t}}\left(x'_{m},\dots, x'_{m+m_t-1}\right).
\end{align}
Again, if $y=0$, then $\lambda((P\oplus_m P_t)[S])\le \lambda_{\mathcal{E}}(x'_1, \dots, x'_{m-1},y)\le f(\lambda_0)$, we are done. Otherwise, it follows from  $|S\cap [m, m+m_t-1]|\leq k$ and $\lambda(P_t[S])\leq \lambda_0$ that
\[
\lambda_{\mathcal{E}_{t}}\left(\frac{x'_{m}}{y},\dots, \frac{x'_{m+m_t-1}}{y}\right)\leq \lambda(P_t[S])\leq \lambda_0,
 \]
which together with \eqref{EQUATION:lambda_(P_S)} yields
\begin{align}
\lambda((P\oplus_m P_t)[S])\leq \lambda_{\mathcal{E}}(x'_1, \dots, x'_{m-1}, y)+\lambda_0y^r\leq f(\lambda_0).\notag
\end{align}

All the arguments above shows that $(P\oplus_m P_t)_{t=1}^{\infty}$ is a $(k, f(\lambda_0))$-sequence, which completes the proof of Theorem \ref{THEOREM:new_nonjump}.
\end{proof}

Now we prove Theorem  \ref{THEOREM:main-theorem}.
\begin{proof}[Proof of Theorem  \ref{THEOREM:main-theorem}]
For a minimal pattern $P=(m, \mathcal{E})$, let
\[
f_i(\lambda)=\max\{\lambda_{\mathcal{E}}(x)+\lambda x_i^r \colon x=(x_1, \dots, x_m)\in \Delta_{m-1}\}
\]
Suppose that $\alpha$ is a non-jump for $r$ given by the Frankl-R\"{o}dl method. Then there exists a  non-jump sequence $(\mathcal{G}_t)_{t=1}^{\infty}$ on $\lambda_0$, then we have $(P\oplus_i P_{\mathcal{G}_t})_{t=1}^{\infty}$ is a non-jump sequence on $f_i(\alpha)$ for every $i\in [m]$ by Theorem \ref{THEOREM:new_nonjump}.
By Lemma \ref{LEMMA:NON-JUMP-CON-ON-a=a-is-non-jump}, $f_i(\alpha)$ is a non-jump for each $i\in[m]$, which completes the proof of Theorem \ref{THEOREM:main-theorem}.
\end{proof}
We end this section by the following remarks.
\begin{itemize}
  \item Let $m,r$ be integers with $m\ge r\ge 2$, and let $P^m=(m,\mathcal{E}^m)$  be a $r$-pattern by letting $\mathcal{E}^m$ be a collection of all $r$-sets of $[m]$. This means that if $\mathcal{G}$ is a  $P^m$-construction if and only if $\mathcal{G}$ is a complete $m$-partite $r$-graph. For $x=(x_1, \dots, x_m)\in \Delta_{m-1}$ and $S\in \binom{[m]}{r}$, let $x_S=\prod_{i\in S}x_i$. Then $\lambda_{\mathcal{E}^m}(x)=\sum_{S\in \binom{[m]}{r}}x_S$. For every non-jump number $\alpha$ obtained by the Frankl-R\"{o}dl method, we have
\[
f_m(\alpha)=\max_{x\in \Delta_{m-1}}\left\{ \sum_{S\in \binom{[m]}{r}}x_S+\alpha x_m^r\right\}
\]
is a non-jump for $r$.
  \item Suppose $\alpha$ is a known non-jump number obtained by the Frankl-R\"{o}dl method. If a minimal pattern $P$ in the proof of Theorem \ref{THEOREM:main-theorem} satisfies $\lambda(P)\leq \alpha$, then the resulting value $f(\alpha)$ maybe equal  $\alpha$. However, if $\lambda(P)>\alpha$, then we can obtain a non-jump number $f(\alpha)$ that is strictly greater than $\alpha$.
  \item We can replace the $i\in [m]$ in the proof of Theorem \ref{THEOREM:main-theorem} with the subset $T\subseteq[m]$, and then we can get more functions.
\end{itemize}

\section{An application to Tur\'{a}n density}

In Section 2, we have defined the union of two patterns, and studied its properties. Beside finding non-jumps, it has many applications to Tur\'{a}n densities. In this section, we list one and prove Theorem \ref{infinite-lambda}. First, we give some fundamental theorems on Tur\'{a}n densities.  Define
\begin{align*}
\Lambda ^{(r)} := \left\{\lambda(\mathcal{G}) \colon \text{$\mathcal{G}$ is an $r$-graph} \right\}
\end{align*}
for every integer $r\ge 2$.
Pikhurko \cite{PI14} proved the following result.

\begin{theorem}[see Pikhurko \cite{PI14}]
$\Lambda ^{(r)} \subseteq \Pi_{\mathrm{fin}} ^{(r)}$.
\end{theorem}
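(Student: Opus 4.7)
The plan is to exhibit, for a given $r$-graph $\mathcal{G}$ on $[m]$ with $\lambda_{0}:=\lambda(\mathcal{G})$, a finite family $\mathcal{F}$ of $r$-graphs whose Tur\'{a}n density equals $\lambda_{0}$. The natural candidate is the \emph{blowup-obstruction family}: for a large integer $N$, let $\mathcal{F}_{N}$ be the (finite) collection of all $r$-graphs on at most $N$ vertices that fail to embed as a subgraph of any blowup of $\mathcal{G}$. The goal is then to verify $\pi(\mathcal{F}_{N})=\lambda_{0}$ for $N$ chosen sufficiently large.

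For the lower bound $\pi(\mathcal{F}_{N})\ge \lambda_{0}$, I would take an $n$-vertex blowup of $\mathcal{G}$ with part sizes $\lfloor x_{i}n\rfloor$, where $x=(x_{1},\ldots,x_{m})\in\Delta_{m-1}$ maximises $\lambda_{\mathcal{E}_{\mathcal{G}}}$. Any subgraph of a blowup of $\mathcal{G}$ is itself a subgraph of a blowup of $\mathcal{G}$ (restrict the partition), so these constructions are $\mathcal{F}_{N}$-free, and their edge density tends to $\lambda_{0}$ as $n\to\infty$.

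For the upper bound the main tool is random sampling. If $\mathcal{H}$ is an $\mathcal{F}_{N}$-free $r$-graph on $n$ vertices, then every $N$-subset $S\subseteq V(\mathcal{H})$ satisfies $\mathcal{H}[S]\notin\mathcal{F}_{N}$, so $\mathcal{H}[S]$ embeds into a blowup of $\mathcal{G}$ with some part sizes $n_{1}+\cdots+n_{m}=N$. By the continuous relaxation that defines the Lagrangian, $e(\mathcal{H}[S])\le \sum_{e\in\mathcal{G}}\prod_{i\in e}n_{i}\le \lambda_{0}N^{r}/r!$. Averaging over $S$ gives
\begin{align*}
\frac{e(\mathcal{H})}{\binom{n}{r}} = \mathbb{E}_{S}\!\left[\frac{e(\mathcal{H}[S])}{\binom{N}{r}}\right] \le \frac{\lambda_{0}N^{r}}{r!\binom{N}{r}},
\end{align*}
so $\pi(\mathcal{F}_{N})\le \lambda_{0}(1+O(1/N))$.

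The hard part, and the main obstacle I anticipate, is closing the $O(1/N)$ gap between $N^{r}/r!$ and $\binom{N}{r}$: for any fixed $N$ the naive bound strictly exceeds $\lambda_{0}$ whenever the Lagrangian maximiser is attained at an integer-compatible point, so the sampling argument alone only yields $\pi(\mathcal{F}_{N})\to\lambda_{0}$ as $N\to\infty$, not equality at a single finite $N$. Two plausible remedies seem natural. The first is to enlarge $\mathcal{F}_{N}$ with additional ``near-blowup'' obstructions, designed so that any $\mathcal{F}$-free host graph is forced into a rigid blowup-like structure and a sharper density bound can be read off directly. The second, which I expect to be closer to Pikhurko's actual approach, is to combine the sampling argument with a stability/supersaturation principle, possibly invoking the hypergraph removal lemma: if $\mathcal{H}$ is $\mathcal{F}_{N}$-free and has density exceeding $\lambda_{0}+\epsilon$, then supersaturation produces many copies of members of $\mathcal{F}_{N}$ inside $\mathcal{H}$, yielding the desired contradiction. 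Making one of these routes precise is the substantive technical step that turns the limiting inequality into the clean equality $\pi(\mathcal{F}_{N})=\lambda_{0}$.
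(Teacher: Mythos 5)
The paper does not prove this theorem; it cites it to Pikhurko~\cite{PI14}, so your proposal can only be compared against the cited source rather than against an argument in the paper itself.

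Your sketch has the right first move and your self-diagnosis of where it breaks is accurate, but neither of the two repairs you gesture at closes the gap. The sampling bound
\[
\frac{e(\mathcal{H})}{\binom{n}{r}} \;\le\; \frac{\lambda_0\, N^r}{r!\binom{N}{r}}
\]
gives $\pi(\mathcal{F}_N)\le\lambda_0\bigl(1+O(1/N)\bigr)$ for each fixed $N$ and nothing more: a graph all of whose $N$-vertex induced subgraphs embed into blowups of $\mathcal{G}$ need not itself be a subgraph of a blowup, because it can locally look like different near-optimal blowup partitions in different regions, so the discrete local bound $\lambda_0 N^r/r!$ (rather than $\lambda_0\binom{N}{r}$) is genuinely the best that local embeddability yields. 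Your second remedy is circular. The supersaturation statement you would need, namely that density above $\lambda_0$ forces a copy of some $F\in\mathcal{F}_N$, is the content of the inequality $\pi(\mathcal{F}_N)\le\lambda_0$ itself; supersaturation and the removal lemma amplify the existence of copies once you are above the Tur\'an density, they do not produce a first copy from a density bound of the form $\lambda_0+O(1/N)$. So this route only reproves $\pi(\mathcal{F}_N)\le\lambda_0+O(1/N)$.

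Your first remedy (enrich the forbidden family so that $\mathcal{F}$-freeness forces a rigid blowup-like global structure) is the direction that actually works, but you leave it entirely unspecified, and that specification is the whole theorem. Pikhurko's proof is a genuine structural argument: it reduces to a minimal pattern, exploits rigidity of the Lagrangian optimiser, proves a stability statement that near-extremal $\mathcal{F}$-free graphs admit a vertex partition approximating a blowup partition, and then runs a local cleanup (in the spirit of symmetrisation) to turn approximate structure into an exact extremal bound on finitely many vertices. None of this is recoverable from the averaging calculation plus a black-box appeal to supersaturation, so as written your proposal does not constitute a proof.
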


It was shown by Brown and Simonovits \cite{BS85} that $\Lambda ^{(r)}$ is dense in $\Pi_\infty ^{(r)}$. As the latter is a closed set, Grosu \cite{G16} showed that the closure of $\Lambda ^{(r)}$ is $\Pi_\infty ^{(r)}$. Namely, it was proved that

\begin{lemma}[see Grosu \cite{G16}]\label{Lemma:closed}
$\overline{\Lambda}^{(r)} =  \Pi_\infty ^{(r)}$.
\end{lemma}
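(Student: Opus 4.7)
The plan is straightforward once the three facts recalled in the surrounding text are in hand: (i) Pikhurko's theorem $\Lambda^{(r)} \subseteq \Pi_{\mathrm{fin}}^{(r)}$, (ii) the Brown--Simonovits density statement that $\Lambda^{(r)}$ is dense in $\Pi_\infty^{(r)}$, and (iii) the closedness of $\Pi_\infty^{(r)}$ in $[0,1]$ mentioned immediately above. I would prove the set equality by establishing each inclusion in turn.

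For the inclusion $\overline{\Lambda}^{(r)} \subseteq \Pi_\infty^{(r)}$, I first observe the chain $\Lambda^{(r)} \subseteq \Pi_{\mathrm{fin}}^{(r)} \subseteq \Pi_\infty^{(r)}$: the first containment is (i), and the second is immediate from the definitions, since every finite family of $r$-graphs is in particular a (possibly infinite) family. Taking closures preserves inclusion, yielding $\overline{\Lambda}^{(r)} \subseteq \overline{\Pi_\infty^{(r)}}$, and by (iii) the right-hand side equals $\Pi_\infty^{(r)}$ itself. For the reverse inclusion $\Pi_\infty^{(r)} \subseteq \overline{\Lambda}^{(r)}$ I would appeal to (ii) directly: for every $a \in \Pi_\infty^{(r)}$ and every $\epsilon > 0$, density produces an $r$-graph $\mathcal{G}$ with $|\lambda(\mathcal{G}) - a| < \epsilon$, so $a$ is a limit of points of $\Lambda^{(r)}$ and therefore belongs to $\overline{\Lambda}^{(r)}$.

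Since (i), (ii), and (iii) are all cited as established in the surrounding text, the deduction itself is essentially formal point-set topology and there is no genuine obstacle; the real content is packaged into the quoted results. The step that would become the main obstacle if it had to be proved from scratch is the closedness in (iii): turning a convergent sequence $a_k = \pi(\mathcal{F}_k) \to a$ into a single family $\mathcal{F}$ with $\pi(\mathcal{F}) = a$. A natural route would be a diagonal construction picking $\mathcal{F}_k$-free $r$-graphs $H_k$ on $n_k \to \infty$ vertices whose edge densities tend to $a$ and setting
\[
\mathcal{F} := \{F : F \not\subseteq H_k \text{ for all sufficiently large } k\};
\]
the lower bound $\pi(\mathcal{F}) \geq a$ is then witnessed by the $H_k$ themselves, while the delicate upper bound $\pi(\mathcal{F}) \leq a$ requires a supersaturation or hypergraph-limit argument to exclude $\mathcal{F}$-free constructions of density exceeding $a$ in the limit. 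With that in place, both inclusions above close and the equality $\overline{\Lambda}^{(r)} = \Pi_\infty^{(r)}$ follows.
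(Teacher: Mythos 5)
Your proof is correct and matches the paper's implicit reasoning: the lemma is cited to Grosu, but the two preceding sentences in the text already package exactly your two-step deduction (Brown--Simonovits density gives $\Pi_\infty^{(r)} \subseteq \overline{\Lambda}^{(r)}$, while $\Lambda^{(r)} \subseteq \Pi_\infty^{(r)}$ together with closedness of $\Pi_\infty^{(r)}$ gives the reverse inclusion). Your added sketch of how one might prove closedness of $\Pi_\infty^{(r)}$ from scratch goes beyond anything the paper attempts, and you correctly flag that this is where the genuine work would lie.
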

Next we apply Lemma \ref{Lemma:closed} to prove Theorem \ref{infinite-lambda}

\begin{proof}[Proof of Theorem \ref{infinite-lambda}]
Let $r\ge 2$ be an integer. By Lemma \ref{Lemma:closed}, it suffices to show that if $a \in \overline{\Lambda}^{(r)}$, then $1-\frac{1-a}{m^{r-1}}\in \overline{\Lambda}^{(r)}$ for each integer $m\ge 2$.

For $m\ge 2$, let $P = (m, \mathcal{E})$ be a $r$-pattern with $\mathcal{E}=[m]^{(r)}\setminus \{\multiset{i^{(r)}}:i\in[m]\}$, where $[m]^{(r)}$ denotes all $r$-multiset on $[m]$. For $x=(x_1,\ldots,x_m)\in \Delta_{m-1}$,
\begin{align}\label{Lagrange-P-Grosu}
\lambda_{\mathcal{E}}(x)
= r!\,\sum_{E\in \mathcal{E}}\; \prod_{i=1}^m\; \frac{x_i^{E(i)}}{E(i)!}=1-\sum_{i=1}^mx_i^r.
\end{align}
For a $r$-graph $\mathcal{G}$, consider the pattern $P\oplus_{[m]} P_{\mathcal{G}}= (m|V(\mathcal{G})|, \widehat{\mathcal{E}})$. By Corollary \ref{CLAIM:lambda(p_t)=varphi(lambda(p_t))-subset} and \eqref{Lagrange-P-Grosu}, we have
\begin{align}\label{lambda-union-infinite}
\lambda(P\oplus_{[m]} P_{\mathcal{G}})
&=\max\left\{ \lambda_{\mathcal{E}}(x)+\lambda(\mathcal{G}) \left(\sum_{i=1}^mx_i^r\right) : x=(x_1, \dots, x_{m})\in \Delta_{m-1}\right\}\notag\\
&=1-(1-\lambda(\mathcal{G}))\max_{x\in \Delta_{m-1}}\sum_{i=1}^mx_i^r\notag\\
&=1-\frac{1-\lambda(\mathcal{G})}{m^{r-1}}.
\end{align}
Thus, if $a\in \Lambda^{(r)}$, then $1-\frac{1-a}{m^{r-1}}\in \Lambda^{(r)}$. Suppose that $a\in \overline{\Lambda}^{(r)}\setminus \Lambda^{(r)}$. Then there exists a sequence of $r$-graphs $(\mathcal{G}_t)_{t=1}^{\infty}$ such that $\lim_{t\rightarrow \infty}\lambda(\mathcal{G}_t)=a$. Considering  the pattern $P\oplus_{[m]} P_{\mathcal{G}_t}$, we have $1-\frac{1-\lambda(\mathcal{G}_t)}{m^{r-1}}\in \Lambda^{(r)}$, which together with
\[
\lim_{t\rightarrow \infty}\left(1-\frac{1-\lambda(\mathcal{G}_t)}{m^{r-1}}\right)=1-\frac{1-a}{m^{r-1}}
\]
yields that $1-\frac{1-a}{m^{r-1}}\in \overline{\Lambda}^{(r)}$.
\end{proof}

~~

\textbf{Acknowledgements.} The authors would like to thank Dr. Xizhi Liu for bringing this topic to our attention and his fruitful discussions.

\bibliographystyle{abbrv}
\bibliography{nonjump}
\end{document}